\documentclass{article}

\usepackage{graphicx}
\usepackage{indentfirst,csquotes}
\usepackage{amssymb,amsthm,amsmath}
\usepackage{xcolor,paralist,hyperref,fancyhdr,etoolbox,cleveref}
\usepackage{orcidlink}
\usepackage[square,sort,comma,numbers]{natbib}
\usepackage{algorithm}
\usepackage{algpseudocode}
\usepackage{subcaption}
\usepackage{tikz-cd}
\usetikzlibrary{positioning, fit, backgrounds, arrows.meta}
\usepackage{enumitem}
\allowdisplaybreaks
\numberwithin{equation}{section}
\hypersetup{colorlinks=true, linkcolor=black, filecolor=black, urlcolor=black}

\newcommand{\RR}{{\mathbb{R}}} % reals
\newcommand{\NN}{{\mathbb{N}}} % natural numbers
\newcommand{\calA}{{\mathcal{A}}}
\newcommand{\calF}{{\mathcal{F}}}
\newcommand{\calL}{{\mathcal{L}}}
\newcommand{\calK}{{\mathcal{K}}}
\newcommand{\calN}{{\mathcal{N}}}
\newcommand{\calO}{{\mathcal{O}}}
\newcommand{\calT}{{\mathcal{T}}}
\newcommand{\calX}{{\mathcal{X}}}
\providecommand{\argmin}{\operatorname*{argmin}}
\newcommand{\inner}[2]{\left\langle {#1} , {#2} \right\rangle} % inner product
\newcommand{\proj}[2]{P_{#1} \left[ {#2} \right]} % Projection 

\theoremstyle{plain}
\newtheorem{theorem}{Theorem}

\newtheorem{lemma}[theorem]{Lemma}

\newtheorem{assumption}{Assumption}

\newcommand{\footremember}[2]{%
	\footnote{#2}
	\newcounter{#1}
	\setcounter{#1}{\value{footnote}}%
}
\newcommand{\footrecall}[1]{%
	\footnotemark[\value{#1}]%
}

\title{An augmented Lagrangian algorithm for constrained nonlinear least-squares} %%%%%%%%%%%%

\author{Pierre Borie \orcidlink{0009-0000-1043-5057}\footremember{udem}{Université de Montréal, Department of Computer Science and Operations Research, Montreal, QC, Canada} \\ \texttt{pierre.borie@umontreal.ca} \and Fabian Bastin~\orcidlink{0000-0003-1323-6787}\footrecall{udem} \\ \texttt{bastin@iro.umontreal.ca} \and Stéphane Dellacherie~\orcidlink{0009-0005-9043-9328}\footremember{hq}{Hydro-Québec, Montreal, QC, Canada}\footremember{uqam}{Université du Québec à Montréal, Montreal, QC, Canada} \\ \texttt{dellacherie.stephane@hydroquebec.com}}

\date{\today}

\begin{document}

	\maketitle
	
	\begin{abstract} 
		\noindent We present an algorithm for solving nonlinear least-squares problems subject to a mix of nonlinear and linear constraints. The nonlinear constraints are handled by reformulating the objective as the augmented Lagrangian function while linear constraints are handled directly. Each iteration consists of approximately solving a linearly constrained problem by means of a gradient projection technique. Our approach also involves a structured approximation of the augmented Lagrangian Hessian. We show global convergence of the method and assess the performance through numerical experiments.
	\end{abstract}
	
	\vspace{1em}\noindent\textbf{Keywords:} constrained nonlinear least-squares, augmented Lagrangian method, structured quasi-Newton update.
	
	\section{Introduction}
	
	We consider the constrained nonlinear least-squares (NLS) problem
	\begin{equation}
		\label{pb:cnls}
		\begin{aligned}
			\min_{x\in \RR^n} \quad & \dfrac{1}{2} \|r(x)\|^2 \\
			\text{s.t.} \quad & c(x) = 0 \\
			& x \in \Omega,
		\end{aligned}
	\end{equation}
	where $\|\cdot\|$ denotes the $\ell_2$ norm. The residuals $r\colon \RR^n \to \RR^{n_r}$ and constraints $c\colon \RR^n \to \RR^{n_c}$ functions are assumed to be twice continuously differentiable with respective Jacobians $J$ and $C$. We may also write the least-squares objective as a general function $f$ with gradient 
	\[\nabla f(x) = J(x)^\top r(x).\] 
	The set $\Omega$ consists of linear constraints and is defined as
	\begin{equation}\label{eq:linear_constraints}
		\Omega = \left\{ x \in \RR^n \ | \ Ax=b,\ l \le x \le u\right\},
	\end{equation}
	with $A\in\RR^{m\times n}$, $m < n$, $b\in \RR^m$, and $l,u \in \RR^n$. Without loss of generality, some components of the latter two vectors can be set to $\pm \infty$ for unbounded parameters. For convenience, we also assume that the linear equalities do not involve single-variable constraints. Formulation~\eqref{pb:cnls} is also suitable for problems with nonlinear inequality constraints of the form $g(x) \ge 0$. The latter are transformed into equality constraints by adding non-negative slack variables, which gives the new constraints
	\begin{equation*}
		g(x) - \nu = 0,\quad \nu \ge 0.
	\end{equation*}
	The lower and upper bounds associated with these slack variables are thus $0$ and $\infty$ respectively. Linear inequality constraints can be converted into equalities similarly.
	
	The Lagrangian for problem~\eqref{pb:cnls}, with respect to the nonlinear constraints, is defined by 
	\begin{equation*}
		\calL(x,\lambda) = f(x) + \inner{\lambda}{c(x)},
	\end{equation*}
	where $\lambda \in \RR^{n_c}$ is the vector of Lagrange multipliers and $\inner{\cdot}{\cdot}$ is the standard inner product.
	
	An algorithm for solving problem~\eqref{pb:cnls} aims to find a first-order critical point $(x_*,\lambda_*)$ that satisfies the necessary condition
	\begin{equation*}
		\begin{aligned}
			&x_* \in \Omega\ \text{and}\ c(x_*)=0 \\
			&\proj{\Omega}{x_*-\nabla_x \calL(x_*,\lambda_*)}=x_*,
		\end{aligned}
	\end{equation*}
	where $\proj{\Omega}{\cdot}$ is the projector operator onto $\Omega$, defined, for any vector $x$, as
	\begin{equation*}
		\proj{\Omega}{x}= \argmin_{v\in \Omega} \ \|v-x\|^2. 
	\end{equation*}
	
	The NLS structure arises in data fitting, parameter estimation, and scientific computing, and its efficient resolution has attracted a sustained research effort~\cite{dennisschnabel:1996,nocedalwright:2006,bjork:2024}. 
	A distinctive feature of NLS problems is that the Hessian of the objective decomposes into a first-order part $J(x)^\top J(x)$, where $J(x)$ is the Jacobian of $r$, and a second-order part involving the individual Hessians of the residuals weighted by their values. Exploiting this decomposition is the central challenge in designing
	efficient algorithms.
	The Gauss--Newton (GN) method retains only the first-order term, which is effective when residuals are small at the solution but can lead to poor convergence otherwise.
	The Levenberg--Marquardt (LM) method, initiated by Levenberg~\cite{levenberg:1944} and Marquardt~\cite{marquardt:1963} and further studied, for instance, in~\cite{bellavia-etal:2018, bergou-etal:2021}, regularizes the GN matrix by a scalar multiple of the identity, providing robustness at the cost of potentially slower convergence on large-residual problems.
	A richer class of approaches, known as structured quasi-Newton (SQN) methods, approximates the second-order part through secant equations tailored to the least-squares structure. The NL2SOL package~\cite{dennisetal:1981,dennis-etal:1989} is a popular implementation of this idea, complementing the GN term with a structured quasi-Newton correction updated via a dedicated secant equation. This algorithm also switches between the GN model or the full Hessian depending on the quality of the step. Other hybrid strategies with a switching criterion based on the residual magnitude have also been explored in~\cite{albaalifletcher:1985, fletcherxu:1987, zhouchen:2010}. Further SQN formulations, including factorized updates and sizing strategies, have been proposed in~\cite{biggs:1977, betts:1976, huschens1994, yabetakahashi:1991, zhang-etal:2000}. A comprehensive numerical comparison of these techniques can be found in~\cite{lucksan-etal:2019}.
	
	Extending structured NLS methods to the constrained setting has received comparatively less attention, though it remains an active area of research. Such formulations arise, for example, in data-fitting problems where the constraints enforce the physical consistency of an underlying model~\cite{grenieretal:2006, delbos-etal2006, pu-etal:2015}. The convex case, especially the bound-constrained case, is studied in~\cite{bellavia-etal:2004, bellavia-etal:2011, porcelli:2013, goncalvesmenezes:2020}.

	For more general nonlinear constraints, most methods operate within a sequential quadratic programming (SQP) framework. In that paradigm, each iteration solves a quadratic subproblem obtained by linearizing the constraints and approximating the Hessian of the Lagrangian. An SQP method whose Hessian approximation is based on a secant equation derived from that of Biggs~\cite{biggs:1977} is proposed by Li et al.~\cite{li-etal:2002}. A BFGS update of the projected Hessian of a merit function is used in~\cite{tjoabiegler:1991}, while a structured approximation of the projected Hessian is updated within the null space of the active constraints in~\cite{amiribartels:1989}.
	For problems with equality constraints only, more recent work includes regularized approaches. A Levenberg--Marquardt approximation of the Lagrangian Hessian is combined with a trust-region method in~\cite{bergou-etal:2021}, and a regularized KKT system is derived in~\cite{orbansiquiera:2020} by adding primal and dual regularization terms.
	Structured approximations of the Hessian within general constrained formulations have also been studied from a theoretical perspective in~\cite{tapia:1988}.
	When the constraints can be split into two categories, as in formulation~\eqref{pb:cnls}, an alternative to SQP methods is the augmented Lagrangian (AL) framework, also known as the method of multipliers, independently introduced by Hestenes~\cite{hestenes:1969} and Powell~\cite{powell:1969}, with further theoretical foundations provided by Rockafellar~\cite{rockafellar:1973}.
	Rather than solving a sequence of quadratic subproblems, AL methods incorporate the nonlinear constraints into the objective as a penalty term and solve a sequence of subproblems in which only the easier constraints remain.
	For problem~\eqref{pb:cnls}, this amounts to approximately minimizing, at each outer iteration, the AL function over the polyhedral set $\Omega$, thereby reducing the problem to a linearly constrained one. This strategy is particularly attractive as the linear part can be handled directly and efficiently by gradient projection techniques, without sacrificing the ability to enforce nonlinear constraints.
	The theoretical underpinnings of this approach, including global convergence results, are developed in~\cite{conn-etal:1988a,conn-etal:1991, conn-etal:1993, conn-etal:1996b} and summarized in~\cite[Chapter~14]{conn-etal:2000}.
	A major implementation of this philosophy is the LANCELOT solver~\cite{conn-etal:1992}, which handles bound-constrained subproblems using a gradient projection technique~\cite{conn-etal:1988b}.
	More recent AL implementations have broadened this framework in several directions. Convergence is established under weaker constraint qualifications in~\cite{andreani-etal:2008}. A primal-dual AL method that behaves like a stabilized SQP method in the local regime is proposed in~\cite{gillrobinson:2012}, a matrix-free AL approach is implemented in~\cite{arreckx-etal:2016}, and an adaptive AL algorithm with inexact subproblem solves is developed in~\cite{curtis-etal:2015}.
	For the inner minimization that arises at each outer AL iteration, gradient projection methods~\cite{moretoraldo:1991, linmore:1999a} are particularly effective for bound-constrained problems as they allow for rapid changes in the active set throughout the iterations. The linear equality case can be handled efficiently, as the projections can be computed by solving a system of normal equations~\cite{gould-etal:2001}.
	When both bounds and linear equalities are present, as in our setting, direct projection onto $\Omega$ is not available in closed form. Nevertheless, we will see in the next section that the norm of the reduced gradient provides a practical criticality measure, as used in solvers such as MINOS~\cite{murtaghsaunders:1978} and LSNNO~\cite{tointtuyttens:1992}.
	
	The present work combines two well-established lines of research: the augmented Lagrangian framework~\cite{conn-etal:1988a}, with its mature convergence theory and gradient projection inner minimization, and the quasi-Newton approximations that exploit the sum-of-squares structure of both the objective and the penalty term, following the secant equation philosophy of~\cite{biggs:1977,dennisetal:1981} adapted to the AL setting in the spirit of~\cite{li-etal:2002}. Bringing these together yields a solver specifically designed for NLS problems while handling constraints of general form, including nonlinear equalities and inequalities alongside linear equalities and bounds. A Julia~\cite{bezanson-etal:2017} implementation of the proposed algorithm is available at \url{https://github.com/UncertainLab/Traulls.jl}.
	
	We end this section by introducing some notations. Components of a vector $x\in\RR^n$ are denoted $x_i$ for $i=1,\ldots,n$. Vectors $e_i$ denote the columns of the $n \times n$ identity matrix written $I_n$ -- or simply $I$ when the dimension is clear from the context. When describing iterative processes to solve problem~\eqref{pb:cnls}, we index vectors and matrices by the iteration number. Our method has a nested structure (Section~\ref{sec:algorithm}): we use an uppercase index $K$ for its outer iterations and a lowercase index $k$ for the inner iterations. For instance, at outer iteration $K$, $(x_K,\lambda_K)$ is the current primal-dual iterate, while for functions evaluated at an iterate $x_k$ we use the shorthand notation $r_k = r(x_k)$, $c_k =c(x_k)$, $J_k=J(x_k)$ etc., and similarly at outer iterates. To not interfere with previous notation for components, we add parentheses to make the distinction clear. For instance, $(x_k)_i$ denotes the $i$-th component of vector $x_k$. For any sequence $(u_k)_k$, the limit of a converging subsequence indexed by $\calK \subset \NN$ will be written $\lim_{k \in \calK} u_k$. The orthogonal complement of a subspace $V \subset \RR^n$, i.e. the set $\{ w\ | \ \inner{w}{v}=0\ \text{for all}\ v \in V\}$, will be written $V^\perp$.
	
	The rest of the paper is organized as follows. In Section~\ref{sec:algorithm}, we present our algorithmic framework and discuss implementation details. We study global convergence in Section~\ref{sec:convergence_analysis}. Numerical experiments are shown in Section~\ref{sec:numerical_experiments} and we conclude with perspectives.

	\section{The algorithm}\label{sec:algorithm}
	
	\subsection{Augmented Lagrangian framework}\label{subsec:al_framework}
	
	We formally state the first assumptions about problem~\eqref{pb:cnls}.
	\begin{assumption}\label{assumption:functions_C2}
		Residuals $r$ and constraints $c$ are twice continuously differentiable on $\RR^n$.
	\end{assumption}
	\begin{assumption}\label{assumption:full_rank_A}
		The equality constraint matrix $A$ is full row rank.
	\end{assumption}
	\begin{assumption}\label{assumption:feasible_linear_cons}
		The feasible region for the linear constraints $\Omega$ is non-empty.
	\end{assumption}
	
	Our method is based on the augmented Lagrangian (AL) function defined as
	\begin{equation}\label{eq:al}
		\Phi(x,\lambda,\mu) := \dfrac{1}{2}\|r(x)\|^2 + \inner{\lambda}{c(x)} + \dfrac{\mu}{2} \|c(x)\|^2,
	\end{equation}
	where $\mu > 0$ is a penalty parameter.
	
	We maintain the linear constraints in the formulation of the problem and only penalize the violation of the nonlinear constraints.
	The gradient is given by 
	\begin{equation*}
		\nabla_x \Phi(x,\lambda,\mu) = J(x)^\top r(x) + C(x)^\top\bar{\lambda}(x,\lambda,\mu),
	\end{equation*}
	where
	\begin{equation}\label{eq:1st_order_mult}
		\bar{\lambda}(x,\lambda,\mu):=\lambda + \mu c(x),
	\end{equation} 
	are the first-order estimates of the Lagrange multipliers. The AL and Lagrangian gradients are related by the identity
	\begin{equation}\label{eq:identity_al_lag_grad}
		\nabla_x \Phi(x,\lambda,\mu) = \nabla_x \calL(x,\bar{\lambda}(x,\lambda,\mu)).
	\end{equation}
	The Hessian is given by
	\begin{equation*}
		\nabla^2_{xx} \Phi(x,\lambda,\mu) = J(x)^\top J(x) + \mu C(x)^\top C(x) +  S(x) 
	\end{equation*}
	with the second-order terms explicitly given by
	\begin{equation}\label{eq:al_hessian_2nd_terms}
		S(x) =\sum_{i=1}^{n_r} r_i(x) \nabla^2r_i(x) + \sum_{i=1}^{n_c} \nabla^2 c_i(x) \bar{\lambda}_i(x,\lambda,\mu).
	\end{equation}
	For fixed $\lambda$ and $\mu$, reformulating problem~\eqref{pb:cnls} with function~\eqref{eq:al} gives the linearly constrained problem
	\begin{equation}\label{pb:cnls_al_reformulation} 
		\begin{aligned}
			\min_{x} \quad& \Phi(x,\lambda,\mu)  \\
			\text{s.t.}  \quad & x \in \Omega.
		\end{aligned}	
	\end{equation}
	Moving the nonlinear constraints into the objective simplifies the problem since only linear constraints are left, which enables one to use iterative methods for linearly constrained optimization~\cite{gould-etal:2001,conn-etal:1988b} while still improving feasibility of the nonlinear constraints. Since $\Omega$ is a convex set, a first-order critical point of~\eqref{pb:cnls_al_reformulation} can also be characterized by the condition
	\begin{equation*}
		\proj{\Omega}{x_*-\nabla_x \Phi(x_*,\lambda,\mu)} = x_*.
	\end{equation*}
	In our methods, and as in standard AL methods~\cite{conn-etal:1991,conn-etal:2000}, each iterate $x_K$ is an approximate solution of a subproblem of the form~\eqref{pb:cnls_al_reformulation} satisfying
	\begin{equation}\label{eq:algo_traulls_criticality_test}
		\|\proj{\Omega}{x_K-\nabla_x\Phi(x_K, \lambda_K, \mu_K)}-x_K\| \le \omega_K,
	\end{equation}
	for a fixed vector of multipliers $\lambda_K$, penalty parameter $\mu_K$ and tolerance $\omega_K$.
	If the iterate $x_K$ improves the feasibility violation of the nonlinear constraints, a dual ascent step is taken on the Lagrange multipliers by setting
	\begin{equation*}
		\lambda_{K+1} = \lambda_K + \mu_K c_K = \bar{\lambda}(x_K,\lambda_K,\mu_K).
	\end{equation*}
	If on the contrary, feasibility has not been improved, the minimization~\eqref{pb:cnls_al_reformulation} is restarted using an increased penalty parameter $\mu_{K+1} > \mu_K$ to enforce feasibility. The method is outlined in Algorithm~\ref{algo:traulls}. Following our notation for iteration dependent quantities, we respectively write $\nabla_x\Phi(x_K, \lambda_K, \mu_K)$ and $\bar{\lambda}(x_K,\lambda_K,\mu_K)$ in the compact form $\nabla_x \Phi_K$ and $\bar{\lambda}_K$.
	\begin{algorithm}
		\caption{Augmented Lagrangian algorithm}\label{algo:traulls}
		\begin{algorithmic}[1]
			\Require Initial point $x^s_0 \in \Omega$, multiplier estimate $\lambda_0$, penalty parameter $\mu_0 > 0$, increase factor $\tau > 1$, tolerance update constants $\omega,\eta,\kappa_\omega,\kappa_\eta,\beta_\omega,\beta_\eta > 0$, stopping tolerances $\omega_*,\eta_* > 0$
			\State{Set initial tolerances $\omega_0\gets \omega \mu_0^{-\kappa_\omega}$ and $\eta_0\gets \eta \mu_0^{-\kappa_\eta}$}
			\For{K=0,1,2,\ldots}
			\State\label{line:inner_iteration} Starting from $x_K^s$, approximately solve $\min_{x\in \Omega} \Phi(x,\lambda_K,\mu_K)$ to find $x_K \in \Omega$ satisfying~\eqref{eq:algo_traulls_criticality_test}.
			\If{$\|c(x_K)\| \le \eta_K$}
			\If{\(\|\proj{\Omega}{x_K-\nabla_x\Phi_K}-x_K\|  \le \omega_*\) and \(\left\Vert c(x_K)\right\Vert \le \eta_*\)}
			\State{\textbf{stop} and} \Return approximate solution $(x_K,\lambda_K)$
			\EndIf
			\State Update the Lagrange multipliers $\lambda_{K+1}\gets \bar{\lambda}_K$ 
			\State Set the next starting point $x_{K+1}^s \gets x_K $
			\State Leave the penalty parameter unchanged \(\mu_{K+1} \gets \mu_K\)
			\State Decrease the tolerances $\omega_{K+1}\gets \omega_K \mu_{K+1}^{-\beta_\omega}$ and $\eta_{K+1}\gets \eta_K \mu_{K+1}^{-\beta_\eta}$
			\Else{}
			\State Leave the iterate unchanged \(\left(x_{K+1}^s,\lambda_{K+1}\right) \gets \left(x_K^s,\lambda_K\right)\)
			\State Increase the penalty parameter $\mu_{K+1} \gets \tau\mu_K$
			\State Update tolerances $\omega_{K+1}\gets \omega \mu_{K+1}^{-\kappa_\omega}$ and $\eta_{K+1}\gets \eta \mu_{K+1}^{-\kappa_\eta}$
			\EndIf
			\EndFor
		\end{algorithmic}
	\end{algorithm}
	One first notices that Algorithm~\ref{algo:traulls} has an outer-inner iteration structure, in the sense that each iteration requires approximately solving an optimization problem (line~\ref{line:inner_iteration}), which also involves an iterative process. The techniques used for the latter are detailed in the following sections.
	
	Tolerances in Algorithm~\ref{algo:traulls} are updated in such a way that both sequences $\omega_K$ and $\eta_K$ tend to $0$ when $K \to \infty$. The update scheme that we outline follows the rules described in~\cite[Chapter 14]{conn-etal:2000}. The parameter values used in our implementation are given in Section~\ref{subsec:implementation_details}.
	
	We describe the convergence in terms of the norm of the projected gradient but this quantity is not directly computable in practice, as there is no closed form formula for the projection on a polyhedral set of the form $\Omega$. For our implementation, we will use the norm of the reduced gradient, which we define now. 
	
	For $x \in \Omega$, $\calA^+(x)$ and $\calA^-(x)$ respectively denote the indices of upper and lower bounds satisfied as equalities (active) at $x$, and we set $\calA(x) := \calA^+(x) \cup \calA^-(x)$, so that
	\begin{equation*}
		i \in \calA(x) \iff x_i \in \{l_i,u_i\}.
	\end{equation*}
	We also introduce the notion of tangent space at a point $x$, written $T(x)$ and defined as the set of directions $d$ such that the same constraints are satisfied with equality at both $x$ and $x+d$. In our case, this corresponds to the linear equality constraints and the active bounds. Formally,
	\begin{equation}\label{eq:tangent_space}
		T(x) := \left\{ d \in \RR^n \ | \ Ad = 0,\ d_i=0\ \text{for all}\ i\in \calA(x)\right\}.
	\end{equation}
	At a given point \((x,\lambda)\) and parameter $\mu$, the reduced gradient is defined as
	\begin{equation*}
		\proj{T(x)}{\nabla_x \Phi(x,\lambda,\mu)},
	\end{equation*}
	where $\proj{T(x)}{\cdot}$ denotes the orthogonal projection operator onto the tangent space $T(x)$.
	Provided that the multipliers associated with the active bounds are of appropriate sign, the quantity $\|\proj{T(x)}{\nabla_x \Phi(x,\lambda,\mu)}\|$ is an appropriate criticality measure used in optimization algorithms for linearly constrained optimization such as MINOS~\cite{murtaghsaunders:1978} and LSNNO~\cite{tointtuyttens:1992}.
	Projections on the tangent space also have the advantage of being available in closed form. The subspace $T(x)$ is the null space of the matrix
	\begin{equation}\label{eq:matrix_tangent_space}
		\tilde{A} = \begin{pmatrix}
			A \\ E_\calA^\top
		\end{pmatrix},
	\end{equation}
	where $E_\calA$ is formed by the columns $(e_i)_{i \in \calA(x)}$ of the identity. We assume throughout that these active constraints remain non-degenerate.
	\begin{assumption}\label{assumption:full_rank_active}
		For every $x \in \Omega$, the matrix~\eqref{eq:matrix_tangent_space} formed from the bounds active at $x$ has full row rank.
	\end{assumption}
	Assumption~\ref{assumption:full_rank_active} strengthens Assumption~\ref{assumption:full_rank_A}: it is a linear independence (non-degeneracy) condition on the active linear constraints, which holds, in particular, when no active bound is linearly dependent on the rows of $A$. Under this assumption, projecting on $T(x)$ is equivalent to applying the orthogonal projection operator
	\begin{equation}\label{eq:minor_subpb_projector}
		\tilde{P} = I - \tilde{A}^\top\left(\tilde{A}\tilde{A}^\top\right)^{-1}\tilde{A}.
	\end{equation}
	We will provide more details on how the projections are calculated in practice in Section~\ref{subsec:implementation_details}, but this offers an initial justification for using
	\begin{equation}\label{eq:criticality_cond_reduced_grad}
		\|\proj{T_K}{\nabla_x\Phi_K}\| \le \omega_K
	\end{equation}
	as a stopping condition in Algorithm~\ref{algo:traulls} instead of~\eqref{eq:algo_traulls_criticality_test}.
	
	\subsection{Inner minimization process}\label{subsec:inner_iteration}
	
	In this section, we describe the inner minimization process used to produce the outer iterates of Algorithm~\ref{algo:traulls}.
	Since the current multiplier estimates $\lambda$ and the penalty parameter $\mu$ are fixed, the objective function for the inner minimization only depends on $x$ so we use the shorthand notation $\varphi : x\mapsto \Phi(x,\lambda,\mu)$.
	Given a point $x_0 \in \Omega$, multiplier estimates $\lambda$, a penalty parameter $\mu$, we aim to find an approximate solution of the problem
	\begin{equation}\label{eq:inner_itr_subpb}
		\begin{aligned}
			\min_x \quad & \varphi(x)\\
			\text{s.t.} \quad & x \in \Omega,
		\end{aligned}
	\end{equation}
	and such that
	\begin{equation*}
		\|\proj{T(x)}{\nabla \varphi(x)}\| \le \omega,
	\end{equation*}
	for a tolerance $\omega > 0$. This task also involves an iterative process, for which the iterates will be indexed with subscript $k$ but are distinct from the outer iterates of Algorithm~\ref{algo:traulls}. At each iteration $k$, we compute a step $s_k$ and recursively update the iterate by $x_{k+1}=x_k+s_k$. The step is obtained after minimizing a model of the objective function $\varphi$ around $x_k$. We consider the quadratic model
	\begin{equation}\label{eq:al_quadratic_model}
		m_k(x) = \varphi_k + \inner{g_k}{x-x_k} + \dfrac{1}{2}\inner{x-x_k}{H_k(x-x_k)},
	\end{equation}
	where $\varphi_k:=\varphi(x_k)$, $g_k:=\nabla \varphi_k$ and $H_k$ is a symmetric approximation of the true Hessian $\nabla^2_{xx} \varphi_k$. Using the latter would negatively impact the performance of the algorithm since computing the second-order terms~\eqref{eq:al_hessian_2nd_terms} requires too much time and storage in practice. We discuss the aspects of the methods relative to the choice of approximation in Section~\ref{subsec:hessian_approx}. 
	Model~\eqref{eq:al_quadratic_model} is appropriate to describe the algorithm in terms of the iterate. When discussing subproblems, we prefer to emphasize the step and would then use the following model of the primal reduction $\varphi(x_k+s)-\varphi_k$, given by
	\begin{equation*}
		q_k(s) = \dfrac{1}{2}\inner{s}{H_ks} + \inner{g_k}{s}.
	\end{equation*}
	The two formulations are related by $q_k(x-x_k)=m_k(x)-m_k(x_k)$.
	
	We seek to compute the step $s_k$ as an approximate solution of the program
	\begin{align*}
		\min_{s} \quad& q_k(s)  \\
		\text{s.t.}  \quad & x_k+s \in \Omega \\ 
		& \|s\|_\infty \le \Delta_k,
	\end{align*}	
	Here, $s$ denotes the unknown of the subproblem, whose solution $s_k$ is the step used to compute the new iterate $x_{k+1}=x_k+s_k$.
	
	Since $x_0$ is feasible, the constraints $x_k+s\in \Omega$ are satisfied at every iteration as long as the steps satisfy
	\[As = 0, \qquad l - x_k \le s \le u-x_k,\]
	for all $k$.
	
	In our method, we also incorporate a trust-region strategy to control and assess the quality of a step. Therefore, we add to each subproblem the constraint $\|s\|_\infty \le \Delta_k$ with a radius $\Delta_k > 0$, which reflects the domain on which we believe that the model well approximates the true function. We use the $\ell_\infty$ norm because the intersection of the resulting trust region with $\Omega$ is again a polyhedron of the form~\eqref{eq:linear_constraints}: the radius constraint only tightens the bounds and leaves the linear equalities untouched, so the subproblem retains the structure of a problem with bounds and linear equalities that our projection machinery handles directly. Since $\|s\|_\infty \le \Delta_k$ amounts to imposing $s_i \in [-\Delta_k,\Delta_k]$ for all $i$, we can rewrite this subproblem	
	\begin{subequations}\label{pb:quadratic_subproblem} 
		\begin{align}
			\min_{s} \quad& q_k(s)  \\
			\text{s.t.}  \quad & As=0 \\
			& l_k \le s \le u_k,
		\end{align}	
	\end{subequations}
	where $l_k$ and $u_k$ are iteration-dependent bounds having components $(l_k)_i = \max\left(-\Delta_k,l_i-(x_k)_i\right)$ and $(u_k)_i = \min\left(\Delta_k,u_i-(x_k)_i\right)$ for all $i=1,\ldots,n$.
	
	The success of an iteration and the update of the trust region are evaluated by the ratio
	\begin{equation}\label{eq:tr_ratio}
		\rho_k = \dfrac{\varphi(x_k+s_k)-\varphi_k}{q_k(s_k)}.
	\end{equation}
	We follow the standard step acceptance criteria~\cite{conn-etal:2000}, which require constants $\eta_1, \eta_2, \gamma_1, \gamma_2$ such that 
	\begin{equation}\label{eq:cond_constants_step_acceptance}
		0< \eta_1 \le \eta_2 < 1 \text{ and } 0< \gamma_1 \le \gamma_2 < 1.
	\end{equation}
	If $\rho_k  > \eta_1$, the step is accepted and the trust region is expanded. Otherwise, the step is rejected and the region reduced. A typical scheme to update the radius $\Delta_k$ would be to set
	\begin{equation}\label{eq:tr_basic_update}
		\Delta_{k+1} \in \left\{\begin{aligned}
			& \left[\Delta_k,\infty\right) & &\text{if } \rho_k \ge \eta_2 \\
			& \left[\gamma_2\Delta_k, \Delta_k\right]  & &\text{if } \rho_k \in [\eta_1,\eta_2) \\
			& \left[\gamma_1\Delta_k, \gamma_2\Delta_k\right]  & &\text{if }  \rho_k<\eta_1
		\end{aligned}\right.
	\end{equation}
	The procedure employed to solve subproblem~\eqref{pb:quadratic_subproblem} is described in Algorithm~\ref{algo:trinner_iteration} and the latter is analyzed in the rest of this section. 
	\begin{algorithm}
		\caption{Trust region inner iteration algorithm}
		\label{algo:trinner_iteration}
		\begin{algorithmic}[1]
			\Require initial point $x_0 \in \Omega$, radius $\Delta_0 >0$, constants $\eta_1, \eta_2, \gamma_1, \gamma_2$ satisfying conditions~\eqref{eq:cond_constants_step_acceptance}.
			\State set $k \gets 0$ 
			\Repeat{}
			\State compute the model $m_k$
			\State compute a step $s_k$ that sufficiently reduces the model
			\State compute the ratio $\rho_k$~\eqref{eq:tr_ratio}
			\If{$\rho_k > \eta_1$} set $x_{k+1}\gets x_k+s_k$ \Else{} set $x_{k+1} \gets x_k$ \EndIf
			\State set $\Delta_{k+1}$ according to~\eqref{eq:tr_basic_update}
			\State increment $k\gets k+1$
			\Until{$\|\proj{T_k}{g_k}\| \le \omega$}
		\end{algorithmic}
	\end{algorithm}
	
	\subsection{Step computation}\label{subsec:step_computation}
	
	We consider the current feasible iterate $x_k$ and its associated approximate quadratic model $m_k$~\eqref{eq:al_quadratic_model}. The step is computed in two phases. 
	
	\paragraph*{Cauchy step} We first compute a Cauchy step $s_k^C$ that ensures a decrease of the objective function sufficient to establish global convergence of the inner minimization algorithm. Next, we further minimize the objective function by exploring the subspace defined by the constraints active at the Cauchy point $x_k^C:=x_k+s_k^C$. This approach is of common use in gradient projection techniques~\cite[Chapter 16]{nocedalwright:2006} and there are different strategies to compute a Cauchy point~\cite{conn-etal:1988b,moretoraldo:1991,linmore:1999a}. Because of the structure of the linear constraints, we cannot directly project on the whole set $\Omega$ in practice but we can exploit prior knowledge on the active bounds to compute the projection iteratively. That is the reason why we compute our Cauchy step by finding the first local minimizer of the model along the projected gradient path
	\begin{equation}\label{eq:projected_gradient_path}
		s_k(t)=\proj{\Omega}{x_k-tg_k}-x_k \text{ for } t\ge 0.
	\end{equation}
	Our procedure -- described in Appendix~\ref{appendix:cauchy_point_computation} -- is an adaptation to the polyhedral case of algorithm SBMIN~\cite{conn-etal:1988b} used for the inner minimization phase of LANCELOT solver~\cite{conn-etal:1992}.
	
	\paragraph*{Subspace minimization} In order to improve the rate of convergence of the algorithm, we want the total step to achieve a better reduction than the Cauchy step. To do so, we build the next iterate $x_{k+1}$ after a finite sequence of $M$ minor iterates $x_{k,1},\ldots,x_{k,M+1}$. The sequence starts at the Cauchy point and ends at the next iterate, i.e. $x_{k}+s_k^C=x_{k,1}$ and $x_{k+1}=x_{k,M+1}$. This type of approach has been shown to be effective for general optimization with bound constraints~\cite{linmore:1999a} and has also been incorporated into other AL algorithms for the inner loop minimization~\cite{conn-etal:1992,arreckx-etal:2016}.
	
	Each minor iterate is defined after the previous one and is decomposed into 
	\[x_{k,j+1}=x_{k,j}+w_{k,j},\]
	where $w_{k,j}$ is a descent direction for the quadratic model $q_k$. For each minor iterate, we require
	\begin{equation}\label{eq:minor_iterate_feasible}
		\begin{aligned}
			x_{k,j} \in \Omega, & & \|x_{k,j} - x_k \|_\infty \le \Delta_k, & & \calA(x_k^C) \subseteq \calA(x_{k,j}).
		\end{aligned}
	\end{equation}
	The first two conditions are merely that each minor iterate is feasible and the associated step lies within the trust region, while the last condition means that we can only add active bounds during this process. Note that in this context, a bound can become active with respect to the trust region and not only the original bounds on the variables.
	We also require the sufficient decrease between two successive minor iterates
	\begin{equation}\label{eq:minor_iterate_decrease}
		m_k(x_{k,j+1}) \le m_k(x_{k,j}), \qquad j = 1,\ldots,M.
	\end{equation}
	After each minor iteration, the corresponding step is $s_{k,j}:=x_{k,j}-x_k$.
	
	The search direction $w_{k,j}$ is an approximate minimizer of the subproblem
	\begin{subequations}\label{pb:minor_subpb}
		\begin{align}
			\min_w \quad & m_k(x_{k,j}+w) \\
			\text{s.t.} \quad & Aw=0 \label{subeq:minor_subpb_eq_cons} \\
			& w_i = 0, \qquad i \in \calA(x_{k,j}) \label{subeq:minor_subpb_fix_bounds}.
		\end{align}
	\end{subequations}
	At a minor iteration, the free variables, indexed by $\calF(x_{k,j})$, are implicitly subject to the bounds 
	\begin{equation}\label{subeq:minor_subpb_bounds}
		\left(l^{(k,j)}\right)_i \le w_i \le \left(u^{(k,j)}\right)_i, \qquad i \in \calF(x_{k,j})
	\end{equation}
	where $l^{(k,j)}:= l_k - s_{k,j}$ and $u^{(k,j)}:= u_k - s_{k,j}$.
	
	The minor subproblem~\eqref{pb:minor_subpb} is solved by applying the projected conjugate gradient~\cite{gould-etal:2001} method with the previous minor iterate $x_{k,j}$ as a starting point. Three termination cases can occur. First, a direction can be generated such that one of the bounds~\eqref{subeq:minor_subpb_bounds} is violated for one of the free variables. When this happens, the search direction is scaled so that the associated component lies at the bound and the CG iterations stop. The second case occurs when we generate a direction of negative curvature and is handled similarly to the first one, i.e. we modify the direction so that all the variables remain within their bounds. The third case is the normal termination one and occurs when a local minimizer -- with respect to a given tolerance -- has been found. In all cases, we return the obtained search direction $w_{k,j}$, perform the projected line search to compute the point $x_{k,j+1}$ such that~\cref{eq:minor_iterate_feasible,eq:minor_iterate_decrease} are verified. The handling of the termination cases differs in the continuation of the procedure. The last two cases (negative curvature and local optimality) cause the stopping of the minor iterates loop. On the contrary, if the CG iterations stopped because a bound was hit, we go back to solving~\eqref{pb:minor_subpb} using projected conjugate gradient method with $x_{k,j+1}$ as a starting point and $\calA(x_{k,j+1}) \supset \calA(x_{k,j})$ as a new set of fixed components.
	The specification of the projected conjugate gradient method applied to solving~\eqref{pb:minor_subpb} is outlined in Algorithm~\ref{algo:projected_cg_method}.
	\begin{algorithm}
		\caption{The projected conjugate gradient method applied to~\eqref{pb:minor_subpb}}\label{algo:projected_cg_method}
		\begin{algorithmic}[1]
			\State\label{line:pcg_initial_projection}Set $w \gets 0,\ r \gets H_k(x_{k,j}-x_k)+g_k,\ v\gets \tilde{P}r,\ p \gets -v$
			\State Set tolerance $\varepsilon_{cg} \gets \kappa_{cg}\|v\|$
			\Repeat
			\If{$\inner{p}{H_kp}\le 0$}
			\State Set $w^+ \gets w + \gamma p$ where $\gamma$ is the smallest factor such that $w^+_i \in \{l^{(k,j)}_i,u^{(k,j)}_i\}$ 
			\State for $i \in \calF(x_{k,j})$
			\State \textbf{STOP}
			\EndIf
			\State Set $\alpha \gets \inner{r}{v} / \inner{p}{H_kp}$
			\State Set $w^+ \gets w+\alpha p$
			\If{$w^+$ violates a bound}
			\State $w^+ \gets w + \gamma p$ where $\gamma$ is the smallest factor such that $w^+_i \in \{l^{(k,j)}_i,u^{(k,j)}_i\}$
			\State for $i \in \calF(x_{k,j})$
			\State \textbf{STOP}
			\EndIf
			\State Set $r^+ \gets r + \alpha H_kp$
			\State\label{line:pcg_iteration_projection} Set $v^+\gets \tilde{P}r^+$
			\If{$\sqrt{\inner{r^+}{v^+}} < \varepsilon_{cg}$} \textbf{STOP}
			\EndIf
			\State Set $\beta \gets \inner{r^+}{v^+} / \inner{r}{v}$ 
			\State Set $p \gets -v^+ + \beta p$
			\State Set $w\gets w^+,\ r \gets r^+,\ v \gets v^+$
			\Until{$2(n-m-|\calA(x_{k,j})|)$ iterations have been done} 
			\State{}
			\Return $w^+$
		\end{algorithmic}
	\end{algorithm} 
	The operations occurring at lines~\ref{line:pcg_initial_projection}~and~\ref{line:pcg_iteration_projection} are projections onto the null space defined by the constraints~\eqref{subeq:minor_subpb_eq_cons}--\eqref{subeq:minor_subpb_fix_bounds}.
	
	An interesting feature would be to complement each search direction by a steplength computed by a projected search~\cite{moretoraldo:1991,linmore:1999a}. This allows one to add more than one constraint at each minor iteration to the active set. The downside is that it would also require computing the projection of the gradient direction~\eqref{eq:projected_gradient_path} on the feasible set $\Omega$ at every new trial value of the steplength. When $\Omega$ only contains bound constraints, the projection is trivial and cheap to compute. The linear equality case is more costly but the projections can also be computed directly. When $\Omega$ is of the form~\eqref{eq:linear_constraints}, the projection is less trivial, as it amounts to solving a minimum-distance quadratic program over $\Omega$ at every trial steplength. Performing such a projection repeatedly would require a dedicated quadratic programming solver -- an external dependency whose per-trial cost would outweigh the benefit of enlarging the active set more aggressively -- which we prefer to avoid.
	
	We stop the minor-iterations procedure when the reduced gradient associated with the current active set is small enough. More formally, assume we performed the $j+1$ minor minimizations and let $T_{k,j}$ be the tangent space at $x_{k,j}$ with respect to the active bounds in $\calA(x_{k,j})$. We stop the minor iteration loop whenever
	\begin{equation*}
		\left\Vert \proj{T_{k,j}}{\nabla m_k(x_{k,j+1})} \right\Vert \le \kappa_{mlt} \left\Vert \proj{T_{k,j}}{\nabla m_k(x_k)}\right\Vert,
	\end{equation*}
	with $\kappa_{mlt} \in (0,1)$. This inequality estimates if there is relative progress that can be made in the tangent subspace spanned by the free variables. 
	
	\subsection{Structured Hessian approximation}\label{subsec:hessian_approx}
	
	We now discuss how the Hessian of the AL is iteratively updated when we define the quadratic model of iteration $k$ in Algorithm~\ref{algo:trinner_iteration}. To set up the context and notations of this paragraph, we wish to approximate the true Hessian
	\[\nabla^2 \varphi(x_k) = J_k^\top J_k + \mu C_k^\top C_k + S_k,\]  
	by a symmetric matrix $H_k$.
	We recall that the need for an approximation mainly comes from the fact that evaluating the second-order terms in $S_k$ requires too much time and storage to be done in practice, especially for problems with a large number of variables and residuals.
	
	The first approximation we can think of, and the simplest one, is obtained after merely linearizing the residuals and constraints in expression~\eqref{eq:al}, which gives
	\begin{equation}\label{eq:hessian_gn_approx}
		H_k = J_k^\top J_k + \mu C_k^\top C_k.
	\end{equation}
	The GN approximation~\eqref{eq:hessian_gn_approx} -- in reference to its unconstrained counterpart -- is computationally cheap as it involves first-order derivatives quantities already required to evaluate the gradient. This matrix is always positive semidefinite. It is positive definite -- which guarantees the convexity of the quadratic subproblems~\eqref{pb:quadratic_subproblem} -- if and only if the stacked matrix $\left(\begin{smallmatrix} J_k \\ \sqrt{\mu}\,C_k \end{smallmatrix}\right)$ has full column rank, that is, $\ker J_k \cap \ker C_k = \{0\}$. However, it is known to exhibit a slower rate of convergence on problems with nonzero residuals at the solution. This downside is amplified when we are to approximate the Hessian of the Lagrangian, or the AL in our case. Indeed, setting $S_k$ to the zero matrix not only neglects the contribution of the residuals to the curvature of the model, but also neglects the contribution of the Lagrange multipliers, for which there are no a priori reasons to equal zero.
	
	We thus need to take into account the full Hessian to form an approximation. Looking at the literature on this subject, one can observe that there is a variety of approaches focusing on the unconstrained case~\cite{biggs:1977,dennisetal:1981,huschens1994,yabetakahashi:1991,zhang-etal:2000,lucksan-etal:2019}, as it is a major challenge in improving the efficiency of algorithms for problems with nonzero residuals at the solution. Nevertheless, relevant parallels can be drawn with the AL situation, or the constrained case in general, since these studies explore techniques to deal with second-order terms. Because the first-order terms are readily available and provide curvature information, only the second-order components need to be approximated. Therefore, we look for an approximation of the form 
	\begin{equation}\label{eq:hessian_full_approx}
		H_k = B^{GN}_k + B_k,
	\end{equation}
	where $B^{GN}_k$ is the right hand side of~\eqref{eq:hessian_gn_approx} and $B_k$ is a symmetric approximation of $S_k$. 
	
	Structured approximations of the AL Hessian have been studied for general objective functions~\cite{tapia:1988} but our approach is closer to the one employed in~\cite{li-etal:2002}. In the latter, the authors base their approximation on a secant equation derived from a heuristic initially introduced in the unconstrained case~\cite{biggs:1977} that they adapted to approximate the Hessian of the Lagrangian and use it in an SQP algorithm for constrained nonlinear least-squares. This strategy is also at the heart of the SQN method from the popular package for unconstrained nonlinear least-squares NL2SOL~\cite{dennisetal:1981,dennis-etal:1989}. The idea is the following. If we were to approximate each matrix term of the sum~\eqref{eq:al_hessian_2nd_terms}, we would have
	\begin{equation}\label{eq:sum_hessian_approximations}
		B_{k+1} = \sum_{i=1}^{n_r}  r_i(x_{k+1}) B^{r_i}_{k+1} + \sum_{i=1}^{n_c} \bar{\lambda}_i(x_{k+1},\lambda,\mu) B^{c_i}_{k+1},
	\end{equation}
	with $B^{r_i}_{k+1} \approx \nabla^2r_i(x_{k+1})$ and $B^{c_i}_{k+1} \approx  \nabla^2 c_i(x_{k+1})$. To get an accurate approximation, given a step $s_k$, it is reasonable to require
	\begin{equation}\label{eq:components_secant_equations}
		B^{r_i}_{k+1}s_k = \nabla r_i(x_{k+1}) - \nabla r_i(x_k) \qquad \text{and} \qquad B^{c_i}_{k+1}s_k = \nabla c_i(x_{k+1}) - \nabla c_i(x_k),
	\end{equation}
	i.e. that each Hessian maps the change in the variables to the change in the gradients. Noticing that, for each $i$, the vectors $\nabla r_i(x_{k+1}) - \nabla r_i(x_k)$ and $\nabla c_i(x_{k+1}) - \nabla c_i(x_k)$ are respectively the $i$-th rows of $\left(J_{k+1}-J_k\right)^\top$ and $\left(C_{k+1}-C_k\right)^\top$, summing over the residuals and constraints indices all the terms of~\eqref{eq:components_secant_equations} gives, by~\eqref{eq:sum_hessian_approximations}, the structured secant equation
	\begin{equation}\label{eq:structured_secant_eq}
		B_{k+1}s_k = \left(J_{k+1}-J_k\right)^\top r_{k+1} + \left(C_{k+1}-C_k\right)^\top\bar{\lambda}_{k+1}.
	\end{equation}
	To follow the conventional notations of quasi-Newton methods, we will denote the right hand side of~\eqref{eq:structured_secant_eq} by $\tilde{y}_k$ to insist on its distinction with the standard term $y_k:= g_{k+1}-g_k$. Note that, as in SQN methods for unconstrained least-squares~\cite{lucksan-etal:2019}, one has
	\[B^{GN}_ks_k + \tilde{y}_k = y_k,\]
	so the resulting approximation satisfies the standard secant equation $H_ks_k = y_k$.
	
	We can now derive an update formula from equation~\eqref{eq:structured_secant_eq}. In our implementation, we apply the SR1 update
	\begin{equation}\label{eq:structured_sr1_update}
		B_{k+1} = B_k + \dfrac{(\tilde{y}_k-B_ks_k)(\tilde{y}_k-B_ks_k)^\top}{\inner{\tilde{y}_k-B_ks_k}{s_k}},
	\end{equation}
	if $\inner{\tilde{y}_k-B_ks_k}{s_k} \neq 0$. When the denominator in~\eqref{eq:structured_sr1_update} is zero, we simply set $B_{k+1}=B_k$. To avoid numerical errors, we apply the update when
	\begin{equation}\label{eq:structured_sr1_safeguard}
		\left|\inner{\tilde{y}_k-B_ks_k}{s_k}\right| \ge \kappa_{sds}\|s_k\|\|\tilde{y}_k-B_ks_k\|,
	\end{equation}
	for $\kappa_{sds} \in (0,1)$. Inequality~\eqref{eq:structured_sr1_safeguard} is one of the most common safeguards used in SR1 methods~\cite[Section~6.2]{nocedalwright:2006}.
	
	The choice of the SR1 update formula is motivated by its robustness in the least-squares context, as the exhaustive benchmarks in~\cite{lucksan-etal:2019} show, and its tendency to better approximate the true Hessian with each iteration~\cite{conn-etal:1991a}. The approximation could thus be indefinite, which could be considered as a weakness compared to BFGS or DFP updates, that are guaranteed to be positive definite as long as it is the case for the initial approximation $B_0$. However, as it is highlighted in Algorithm~\ref{algo:projected_cg_method}, the use of the conjugate gradients method in a trust region framework handles, and even exploits, the potential non-convexity of the subproblems. Moreover, this update does not require a curvature condition $\inner{s_k}{y_k} > 0$ to be satisfied. In other works on the constrained case, SQN methods are used to approximate the projected Hessian of the Lagrangian \cite{tjoabiegler:1991} -- or of a merit function~\cite{amiribartels:1989} -- in the null space of the active constraints. The BFGS update is then preferred because this matrix is, under standard assumptions, positive semidefinite, according to the second-order necessary conditions.
	
	We end our description of the Hessian approximation by discussing hybrid updates, a feature commonly used in SQN methods for nonlinear least-squares algorithms, either in the unconstrained~\cite{dennisetal:1981,albaalifletcher:1985,fletcherxu:1987} or constrained~\cite{tjoabiegler:1991,li-etal:2002} case. The idea is to test at every iteration if the problem has small or large residuals at the solution. In the latter case, a structured update is used to increase the accuracy of the approximated Hessian whereas in the former, the GN approximation is employed, as it is more likely to correspond to the true Hessian. Note that the update, such as~\eqref{eq:structured_sr1_update}, is still computed at every iteration to continue accumulating curvature information. We use a strategy inspired from~\cite{fletcherxu:1987} adapted to the constrained case in~\cite{tjoabiegler:1991} that is based on the asymptotic behavior of the ratio
	\begin{equation*}
		\zeta_k = \dfrac{\varphi(x_{k})-\varphi(x_{k+1})}{\varphi(x_{k})}.
	\end{equation*}
	The approximated Hessian is then updated following the switching rule
	\begin{equation}\label{eq:hybrid_update_strategy}
		H_{k+1} = \left\{\begin{aligned}
			&B^{GN}_{k+1} + B_{k+1} & &\text{if } \zeta_k \le \kappa_{hyb} \\
			&B^{GN}_{k+1}  & & \text{otherwise}, 
		\end{aligned}\right.
	\end{equation}
	where $\kappa_{hyb}$ is a parameter in $(0,1)$. In~\cite{tjoabiegler:1991}, this update is also tested with the value of the AL where it plays the role of a merit function. As we will see in Section~\ref{sec:numerical_experiments}, it has an important impact on the performance of the algorithm.
	
	\subsection{Summary and implementation details}\label{subsec:implementation_details}
	
	We end our discussion by summarizing the relations between outer, inner and minor iterates and the default values for the parameters of the algorithm presented in this section. Recall that the outer iterates are indexed with $K$ and the inner iterates with $k$.
	The outer iterates $x_K$ are the main iterates of the algorithm and are approximate minimizers of the AL. Each outer iterate is formed after a sequence of inner iterates $(x_k)_k$, linked by $x_{k+1}=x_k+s_k$, where $s_k$ is an approximate solution of the QP~\eqref{pb:quadratic_subproblem}. In turn, each inner iterate is formed after a sequence of $M$ minor iterates $x_{k,j}$, linked by $x_{k,j+1}=x_{k,j}+w_{k,j}$, where $w_{k,j}$ is a descent direction obtained by applying the projected conjugate gradient Algorithm~\ref{algo:projected_cg_method}. 
	The relations between the three levels of iterations are illustrated in Figure~\ref{fig:iteration_hierarchy}.
	\begin{figure}[ht]
		\centering
		\begin{tikzpicture}[
			>=Stealth,
			every node/.style={font=\small},
			outerbox/.style={draw=blue!60, fill=blue!5, rounded corners=8pt, thick, inner sep=10pt},
			innerbox/.style={draw=teal!70, fill=teal!5, rounded corners=6pt, thick, inner sep=8pt},
			minorbox/.style={draw=orange!70, fill=orange!5, rounded corners=5pt, thick, inner sep=6pt},
			stepbox/.style={draw=gray!60, fill=gray!8, rounded corners=3pt, inner sep=5pt, align=center, font=\footnotesize},
			]
			% Outer iteration frame
			\node[outerbox, minimum width=13.5cm, minimum height=10.5cm, label={[font=\small\bfseries, anchor=north]north:Outer iterations (Algorithm~\ref{algo:traulls})}] (outer) {};
			\node[font=\footnotesize, anchor=north] at ([yshift=-4mm]outer.north) {Outer iterate $x_K$: approx.\ minimize AL $\Phi$, update $\lambda_{K+1}$, $\mu_{K+1}$, tolerances};
			
			% Inner iteration frame
			\node[innerbox, minimum width=12cm, minimum height=8.2cm, label={[font=\small\bfseries, anchor=north]north:Inner iterations (Algorithm~\ref{algo:trinner_iteration})}] (inner) at ([yshift=-6mm]outer.center) {};
			\node[font=\footnotesize, anchor=north] at ([yshift=-4mm]inner.north) {Trust region steps: $x_{k+1}=x_k+s_k$};
			
			% Left column: Cauchy + TR update + Hessian update + convergence check
			\node[stepbox, minimum width=3.8cm] (cauchy) at ([xshift=-2.2cm, yshift=1.8cm]inner.center) {Cauchy step\\[1pt]{\scriptsize Projected gradient path}};
			\node[stepbox, minimum width=3.8cm, below=8mm of cauchy] (trupdate) {Trust region update\\[1pt]{\scriptsize Accept/reject via $\rho_k$}};
			\node[stepbox, minimum width=3.8cm, below=8mm of trupdate] (hessupdate) {Hessian update\\[1pt]{\scriptsize Structured SR1}};
			\node[stepbox, minimum width=3.8cm, below=8mm of hessupdate] (convcheck) {$\|P_{T_k}[\nabla\varphi_k]\| \le \omega$\,?};
			
			% Minor iteration frame
			\node[minorbox, minimum width=4.5cm, minimum height=6.2cm, right=8mm of cauchy, anchor=north west, yshift=8mm, label={[font=\small\bfseries, anchor=north]north:Minor iterations}] (minor) {};
			\node[font=\footnotesize, anchor=north] at ([yshift=-4mm]minor.north) {$x_{k,j+1}=x_{k,j}+w_{k,j}$};
			
			% Inside minor: PCG + termination
			\node[stepbox, minimum width=3.5cm] (pcg) at ([yshift=6mm]minor.center) {Projected CG\\[1pt]{\scriptsize Algorithm~\ref{algo:projected_cg_method}}};
			\node[stepbox, minimum width=3.5cm, below=7mm of pcg] (boundhit) {Bound hit};
			\node[stepbox, minimum width=3.5cm, below=7mm of boundhit] (negcurv) {Neg.\ curvature / converged};
			
			% Arrows left column
			\draw[->, thick, gray] (cauchy) -- (trupdate);
			\draw[->, thick, gray] (trupdate) -- (hessupdate);
			\draw[->, thick, gray] (hessupdate) -- (convcheck);
			
			% Arrow Cauchy -> minor
			\draw[->, thick, gray] (cauchy.east) -- ([xshift=-2mm]pcg.west|-cauchy.east) -- ([xshift=-2mm]pcg.west) -- (pcg.west);
			
			% Arrows inside minor
			\draw[->, thick, gray] (pcg) -- node[right, font=\scriptsize]{or} (boundhit);
			\draw[->, thick, gray] (boundhit) -- node[right, font=\scriptsize]{or} (negcurv);
			
			% Restart loop for bound hit
			\draw[->, thick, gray, dashed] (boundhit.east) -- ++(6mm,0) |- node[right, font=\scriptsize, pos=0.25]{restart} (pcg.east);
			
		\end{tikzpicture}
		\caption{Hierarchy of the three iteration levels in the AL algorithm. The outer loop updates the Lagrange multipliers and penalty parameter. Each outer iteration triggers a sequence of trust-region inner iterations, and each inner iteration computes its step via a Cauchy point followed by minor iterations using the projected conjugate gradient method.}
		\label{fig:iteration_hierarchy}
	\end{figure}
	
	\paragraph*{Computing projections}
	
	From our description of the step computation, we can see that whether during the Cauchy step calculation phase or during the minor iteration loop, the active set -- and thus the projection operator $\tilde{P}$ at~\eqref{eq:minor_subpb_projector} -- changes over the course of an iteration. It is therefore worth examining how this operator is updated. One can compute the projection of a vector $v$, i.e. $\tilde{P}v$, by first solving, for an auxiliary variable $y$, the linear system
	\begin{equation}\label{eq:proj_normal_eq_auxiliary_syst}
		\left(\tilde{A}\tilde{A}^\top\right)y = \tilde{A}v,
	\end{equation}
	and retrieve the projection by
	\begin{equation}
		v - \tilde{A}^\top y.
	\end{equation}
	This approach, referred to as the \textit{normal equations} approach~\cite{gould-etal:2001}, requires solving the system~\eqref{eq:proj_normal_eq_auxiliary_syst}. This is done by using the Cholesky decomposition of $\tilde{A}\tilde{A}^\top = LL^\top$ -- with $L$ lower triangular -- and system~\eqref{eq:proj_normal_eq_auxiliary_syst} is thus reduced to two successive lower triangular systems. The latter factorization is well defined since the matrix $\tilde{A}\tilde{A}^\top$ is symmetric by construction and positive definite by Assumption~\ref{assumption:full_rank_active}. Moreover, the block structure of $\tilde{A}\tilde{A}^\top$ can be exploited to incrementally maintain the $L$ factor -- instead of forming it from scratch -- across the changes in the active set. Given an active set $\calA \subset \{1,\ldots,n\}$ of cardinal $p < n-m$ and writing $A_\calA := AE_\calA$, for the restriction to the columns of $A$ indexed by $\calA$, $\tilde{A}\tilde{A}^\top $ can be written
	\begin{equation}\label{eq:augmented_grammat}
		\begin{bmatrix}
			AA^\top &  A_{\calA} \\  A_{\calA}^\top & I_p
		\end{bmatrix}.
	\end{equation}
	From~\eqref{eq:augmented_grammat}, one can show~\cite[see Section 4.2.9]{golubvanloan:2013} that the $L$ factor has the corresponding block pattern
	\[L = \begin{bmatrix}
		L_{11} & 0 \\ L_{21} & L_{22}
	\end{bmatrix},\]
	where 
	$L_{11}$ is the $m \times m$ Cholesky factor of $AA^\top$, $L_{21} := (L_{11}^{-1} A_{\calA})^\top$ is of size $p \times m$, and $L_{22}$ is the $p \times p$ Cholesky factor of $G := I_p - L_{21}L_{21}^\top$.
	The matrix $A$ being fixed, forming its Cholesky decomposition and $L_{11}^{-1} A$ just has to be done once at the start of the algorithm. Therefore, $L_{11}$ and $L_{21}$ -- the latter given by extracting the columns of $L_{11}^{-1} A$ indexed by $\calA$ -- are both available quantities and only the $L_{22}$ part needs to be maintained. Since adding an index to $\calA$ results in adding a column to $A_\calA$, and thus in appending a row and a column to $G$, the new factor $L_{22}$ can be updated incrementally by applying a bordered update to the existing one in $\calO(p^2)$ flops, which is less costly than the $\calO((m+p)^3)$ time complexity required to form the entire $L$ factor from scratch.
	\paragraph*{Bound constrained problems}
	Our intention with the work presented in this paper was to conceive an algorithm able to handle directly linear constraints of the form~\eqref{eq:linear_constraints} but we also accept problems where linear constraints are only bounds and then
	\[\Omega = \left\{x \in \RR^n \ |\ l \le x \le u\right\}.\]
	The framework we have presented remains valid for this formulation. The only practical difference is in the computation of projections. In the bound-constrained  case, the projection of a vector $x$ on the set $\Omega$ has components
	\begin{equation}\label{eq:projection_bounds}
		\left(\proj{\Omega}{x}\right)_i = 
		\left\{ \begin{aligned}
			& l_i & &\text{if } x_i < l_i \\
			& x_i & &\text{if } x_i \in [l_i,u_i] \\
			& u_i & &\text{if } x_i > u_i
		\end{aligned} \right.
	\end{equation}
	In our implementation, the only practical difference with the polyhedral case is that we can use directly the norm of the projected gradient
	\[\left\| x_K - \proj{\Omega}{x_K-\nabla_x\Phi_K}\right\|,\]
	as a criticality measure. Also note that the projections onto tangent spaces are also simplified, as they are now given by setting the components corresponding to active bounds to $0$.
	
	\paragraph*{Parameter conditions} 
	Algorithm~\ref{algo:traulls} requires $\mu_0 > 0$, $\tau > 1$, positive tolerance constants $\omega,\eta,\kappa_\omega,\kappa_\eta,\beta_\omega,\beta_\eta$ and positive stopping tolerances $\omega_*,\eta_*$, the tolerance sequences being updated following~\cite[Chapter~14]{conn-etal:2000}. The constants driving the inner minimization of Algorithm~\ref{algo:trinner_iteration} are tied to the subproblem criticality tolerance, so that at outer iteration $K$ we set $\kappa_{cg} = \kappa_{mlt} = \omega_K$, while the safeguard constant $\kappa_{sds} \in (0,1)$ of~\eqref{eq:structured_sr1_safeguard} is taken as the square root of the relative machine precision.
	
	At the start of each outer iteration, the initial approximation of the second-order terms of the Hessian is $B_0 = 0$, and the hybrid switching scheme~\eqref{eq:hybrid_update_strategy} is monitored through a constant $\kappa_{hyb} \in (0,1)$, as in~\cite{tjoabiegler:1991}.
	
	We now discuss the trust region handling. The initial radius value for Algorithm~\ref{algo:trinner_iteration} is set to
	\begin{equation*}
		\Delta_0 = \delta_0 \| g_0\|_\infty, \quad \delta_0 > 0.
	\end{equation*}
	The mechanism to update the trust region given in~\eqref{eq:tr_basic_update} still leaves important flexibility. We choose to follow a standard strategy similar to the one presented in Chapter 17 of~\cite{conn-etal:2000}. We also added a refinement to handle the case where the ratio $\rho_k$ is negative, which can happen when there is very poor agreement between the function and the model. In such cases, we reduce the radius more severely by taking
	\[\Delta_{k+1} = \gamma_1 \Delta_k.\]
	The complete update rule for the trust region radius is then
	\begin{equation*}
		\Delta_{k+1} = \left\{\begin{aligned}
			& \max\left(\alpha_2 \|s_k\|_\infty,\Delta_k\right) & &\text{if } \rho_k \ge \eta_2 \\
			& \Delta_k &  &\text{if } \rho_k \in [\eta_1,\eta_2) \\
			& \alpha_1\|s_k\|_\infty & &\text{if } \rho_k \in [0,\eta_1) \\
			& \min\left(\alpha_1 \|s_k\|_\infty,\gamma_1\Delta_k\right) & &\text{if } \rho_k < 0,
		\end{aligned}\right.
	\end{equation*}
	whose constants satisfy $0 < \alpha_1 < 1 < \alpha_2$, $0 < \eta_1 \le \eta_2 < 1$ and $0 < \gamma_1 < 1$, refining the generic scheme~\eqref{eq:tr_basic_update}. The specific numerical values of all parameters used in our tests are reported in Section~\ref{sec:numerical_experiments}.

	\section{Convergence analysis}\label{sec:convergence_analysis}
	
	In this section, we study the convergence of Algorithm~\ref{algo:traulls} towards a first-order critical point of problem~\eqref{pb:cnls}. We start by showing global convergence of Algorithm~\ref{algo:trinner_iteration}, implying that the inner minimization phase of Algorithm~\ref{algo:traulls} is always well defined. We then establish global convergence of the main algorithm. Our analysis is based on the criticality measure, i.e. the norm of the projected gradient, which differs from the reduced gradient norm~\eqref{eq:criticality_cond_reduced_grad} used in our implementation. At the end of this section, we discuss the validity of our convergence results when~\eqref{eq:criticality_cond_reduced_grad} is used as an inner iteration termination condition instead of~\eqref{eq:algo_traulls_criticality_test}.
	
	\subsection{Global convergence of the inner minimization}
	
	Because our inner-minimization procedure closely resembles the SBMIN algorithm~\cite{conn-etal:1988b} -- used in the bound-constrained inner-minimization phase of LANCELOT~\cite{conn-etal:1992} -- our proof follows the structure of the global convergence proof in~\cite{conn-etal:1988a}. Most of the work lies in reformulating and adapting the intermediate results to the polyhedral case, accounting for the structure of the linear constraints.
	
	We first make the standard assumption
	\begin{assumption}\label{assumption:inner_iterates_compact}
		The set $\calX = \left\{x \ | \ \varphi(x) \le \varphi(x_0)\right\} \cap \Omega$ is non-empty and compact.
	\end{assumption}
	By Assumption~\ref{assumption:functions_C2}, it is implicit that the function $\varphi$ is twice continuously differentiable on $\calX$ so we will not state this in the propositions given in this section.
	
	We recall that the quadratic model is of the form
	\[q_k(s) = \dfrac{1}{2} \inner{s}{H_ks} + \inner{g_k}{s},\]
	where $g_k = \nabla \varphi(x_k)$ and $H_k$ is an approximation of the Hessian based on the structured SR1 formula from Section~\ref{subsec:hessian_approx}. We formulate two assumptions relative to those approximations. The norm used is the induced norm on matrices, i.e. $\|M\| := \sup_{\|x\|=1} \|Mx\|$ for a given matrix $M$.
	\begin{assumption}\label{assumption:model_hessian}
		Defining, for every iteration index $k$, the scalars $b_k$ by 
		\[b_k = 1+\max_{0 \le i \le k} \ \left\| H_i\right\|,\]
		we require that the series $\sum_k \frac{1}{b_k}$ diverges to $\infty$.
	\end{assumption}
	The second assumption states that the norm of the approximating Hessians should not increase too fast compared with the speed of convergence of the function values.
	\begin{assumption}\label{assumption:hessian_norm_compared_convergence_speed}
		\[\lim\limits_{k\to \infty} b_k (\varphi_{k+1}-\varphi_k) = 0.\]
	\end{assumption}
	
	The projected gradient path is defined as
	\[s_k(t)=\proj{\Omega}{x_k-tg_k}-x_k \text{ for } t\ge 0.\]
	The reduction of the model along the projected gradient path may thus be defined as the piecewise quadratic function
	\[\psi(t) = q_k(s_k(t)),\]
	and we denote by $t_k^C$ the first local minimum of $\psi$ subject to the trust region constraint 
	\begin{equation*}
		\|s_k(t)\|_\infty \le \Delta_k.
	\end{equation*}
	The associated Cauchy step is 
	\begin{equation*}
		s_k^C = s_k(t_k^C).
	\end{equation*}
	Because of the choice of the $\ell_\infty$ norm, computing $s_k(t)$ within the trust region corresponds to projecting the direction $x_k-tg_k$ onto
	\begin{equation}\label{eq:proj_grad_feasible_set}
		\left\{d \in \RR^n \ | \ Ad=0,\ l_k \le d \le u_k\right\},
	\end{equation}
	with $(l_k)_i = \max\left(-\Delta_k,l_i-(x_k)_i\right)$ and $(u_k)_i = \min\left(\Delta_k,u_i-(x_k)_i\right)$ for all $i=1,\ldots,n$.
	
	We assume that the total step $s_k$ produces a fraction of the reduction achieved by the Cauchy point, in the sense that
	\begin{equation}\label{eq:step_drecrease_wrt_cauchy}
		q_k(s_k) \le \kappa_{fcd} \ q_k(s_k^C),
	\end{equation}
	for $\kappa_{fcd} \in (0,1]$. 
	
	We detail the behavior of the polygonal line $s_k(t)$. Let $\calA(t)$ be the set of bounds $l_k$ or $u_k$ satisfied with equality at $s_k(t)$. Notice that this definition slightly differs from the active set $\calA(x)$ defined earlier, as the new formulation now takes into account the trust region. At first, if no bounds are active at $x_k$, projecting on the set~\eqref{eq:proj_grad_feasible_set} for $t$ close to $0$ is equivalent to projecting onto the null space of $A$. As $t$ increases, the direction might hit several bounds. Since the set of active bounds can only be increased, we have that
	\begin{equation}\label{eq:nested_active_sets}
		\calA(t) \subseteq \calA(t^\prime) \quad \text{for all}\ 0 < t \le t^\prime.
	\end{equation}
	Let \(0=t_0 < t_1 < \ldots < t_p\) be the successive values of $t$ at which the projected step $s_k(t)$ hits a bound, also called breakpoints. Hitting a bound not only affects the corresponding components, that are now fixed, but it also affects the other components, as the steepest direction must now be projected on a subspace
	\[\left\{d \in \RR^n \ | \ Ad=0,\ d_i=0\ \text{for all fixed components}\ i\right\}\]
	This motivates adapting the notion of tangent space to a point on the projected gradient path
	\begin{equation*}
		T(t) := \left\{ d \in \RR^n \ | \ Ad = 0,\ d_i=0\ \text{for all fixed components}\ i\in \calA(t)\right\},
	\end{equation*}
	for $t\ge0$. This enables us to give a recursive expression of $s_k(t)$ on each interval $[t_i,t_{i+1})$, with $0 \leq i \le p$, as
	\begin{equation}\label{eq:projected_gradient_path_recursion}
		s_k(t) = (t-t_i) \proj{T(t_i)}{-g_k} + s_k(t_i).
	\end{equation}
	We also define the reduced gradient on the projected gradient path
	\begin{equation}\label{eq:reduced_gradient_proj_grad_path}
		z_k(t) := \proj{T(t)}{g_k}.
	\end{equation}
	Note that the reduced gradient, and hence the path $s_k(t)$, are well defined because of the full rank Assumption~\ref{assumption:full_rank_A} of the matrix $A$. As for the differentiability of $\varphi$, we will not restate it in the results of this section.
	
	We now state a first lemma on how the tangent spaces and the reduced gradients at two different positions compare to each other.
	\begin{lemma}\label{lemma:non_increasing_reduced_gradient_norm}
		For all $0 < t < t^\prime$, we have that 
		\begin{equation*}
			T(t^\prime) \subseteq	T(t),
		\end{equation*}
		and
		\begin{equation}\label{eq:lemma_reduced_gradient_norm}
			\|z_k(t^\prime)\| \le \|z_k(t)\|.
		\end{equation}
	\end{lemma}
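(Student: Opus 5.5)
The plan is to read off the first inclusion from the nested active sets~\eqref{eq:nested_active_sets}, and then get the norm inequality as a general fact about orthogonal projections onto nested subspaces.

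\emph{Inclusion.} For $0<t<t^\prime$, \eqref{eq:nested_active_sets} gives $\calA(t)\subseteq\calA(t^\prime)$. Take $d\in T(t^\prime)$. Then $Ad=0$ and $d_i=0$ for every $i\in\calA(t^\prime)$. In particular $d_i=0$ for every $i\in\calA(t)$, so $d\in T(t)$. This proves $T(t^\prime)\subseteq T(t)$. Both sets are linear subspaces, being null spaces of matrices of the form~\eqref{eq:matrix_tangent_space}. By Assumption~\ref{assumption:full_rank_A}, and Assumption~\ref{assumption:full_rank_active} where needed, the projectors onto them are well defined and given by~\eqref{eq:minor_subpb_projector}.

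\emph{Norm inequality.} Write $U=T(t^\prime)\subseteq V=T(t)$. The key identity is $P_U=P_UP_V$. To see it, decompose any $g=P_Vg+(g-P_Vg)$. The second term lies in $V^\perp\subseteq U^\perp$, so $P_U(g-P_Vg)=0$. Applying this with $g=g_k$ gives
\[
z_k(t^\prime)=\proj{T(t^\prime)}{g_k}=\proj{T(t^\prime)}{\proj{T(t)}{g_k}}=\proj{T(t^\prime)}{z_k(t)}.
\]
An orthogonal projection is nonexpansive. Indeed, for any $v$ we have $\|v\|^2=\|P_Uv\|^2+\|v-P_Uv\|^2\ge\|P_Uv\|^2$ by Pythagoras. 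Taking $v=z_k(t)$ yields~\eqref{eq:lemma_reduced_gradient_norm}.

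The main thing to verify is the inclusion $V^\perp\subseteq U^\perp$, which follows from $U\subseteq V$. One should also check that~\eqref{eq:nested_active_sets} really holds along the path; it is taken here as stated earlier. Everything else is routine linear algebra.
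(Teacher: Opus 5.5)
Your proof is correct and follows the paper's argument, with the linear algebra made explicit. The inclusion comes from \eqref{eq:nested_active_sets}, and the norm inequality is the fact that projecting $g_k$ onto a smaller subspace cannot increase its norm, which you justify via $P_U=P_UP_V$ and Pythagoras. You were right to flag \eqref{eq:nested_active_sets}, which both you and the paper simply assume: it holds by construction for the path generated by the recursion \eqref{eq:projected_gradient_path_recursion} (components are frozen once they reach a bound, as in Algorithm~\ref{algo:cauchy_point}), but it is not automatic for the exact projection $\proj{\Omega}{x_k-tg_k}$ when equalities are present (with $A=(1,-1,1)$ and lower bounds on $d_1,d_2$, suitable $x_k,g_k$ give active sets $\{1\}$, $\{1,2\}$, $\{2\}$ in turn).
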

	\begin{proof}
		The first statement follows from the inclusion~\eqref{eq:nested_active_sets}. Inequality~\eqref{eq:lemma_reduced_gradient_norm} then results from the fact that projecting the same vector on a smaller linear subspace reduces the norm of its projection. 
	\end{proof}
	When referring to the tangent space and the reduced gradient at a given breakpoint $t_i$, we will make use of the shorthand notation $T_i := T(t_i)$ and $z_i:=z_k(t_i)$. Notice that because the active set is fixed on each interval $[t_i,t_{i+1})$, so is the tangent space and hence, the reduced gradient is constant. Also, we can deduce from Lemma~\ref{lemma:non_increasing_reduced_gradient_norm} that the tangent spaces associated with each breakpoint form a finite sequence of nested linear subspaces
	\[T_0 \supseteq T_1 \supseteq \ldots \supseteq T_p.\]
	We can now expand equation~\eqref{eq:projected_gradient_path_recursion} into
	\begin{equation}\label{eq:projected_gradient_path_full_expr}
		s_k(t) = -(t-t_i)z_i - \sum_{j=0}^{i-1}(t_{j+1}-t_j)z_j,
	\end{equation}
	which shows that on each interval $(t_i,t_{i+1})$, $s_k(t)$ is differentiable with respect to $t$ and that
	\begin{equation}\label{eq:projected_gradient_path_derivative}
		s_k^\prime(t) = -z_k(t) = -z_i,
	\end{equation}
	for $t \in (t_i,t_{i+1})$.
	
	We start by establishing an inequality on the decrease of the objective function after taking step $s_k$. This will involve the norm of the projected gradient,
	\begin{equation}\label{eq:crit_norm_projected_grad}
		h_k := \left\|s_k(1)\right\|.
	\end{equation}
	
	\begin{lemma}\label{lemma:majoration_reduced_gradient_norm}
		If Assumption~\ref{assumption:inner_iterates_compact} holds and $h_k>0$, then 
		\[\|z_k(t_k^{(1)})\| \ge \dfrac{h_k}{2},\]
		where \[t_k^{(1)}=\frac{h_k}{2\kappa_{ubg}},\]
		and $\kappa_{ubg}$ is the constant defined by $\kappa_{ubg} := \max\left(1, \max_{x \in \calX} \|\nabla \varphi(x)\|\right)$.
	\end{lemma}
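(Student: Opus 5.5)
We argue by contradiction, using the piecewise-linear structure of the projected gradient path together with the monotonicity of the reduced gradient norms in Lemma~\ref{lemma:non_increasing_reduced_gradient_norm}. Since $x_k \in \calX$, we have $\|g_k\| \le \kappa_{ubg}$. By~\eqref{eq:projected_gradient_path_derivative}, $s_k$ is continuous and piecewise affine with $s_k'(t) = -z_k(t)$ away from breakpoints, and $s_k(0)=0$. Hence for any $t \ge 0$,
\begin{equation*}
s_k(t) = -\int_0^t z_k(\tau)\, d\tau .
\end{equation*}
Here $h_k = \|s_k(1)\|$ is computed with the path defined by~\eqref{eq:projected_gradient_path}, that is, the same breakpoint structure.

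Write $t^{(1)} := t_k^{(1)}$, and note $t^{(1)} \le 1/2$ whenever $h_k \le \kappa_{ubg}$. We first check this: $s_k(1) = \proj{\Omega}{x_k-g_k} - \proj{\Omega}{x_k}$, so by nonexpansiveness of the projection $h_k \le \|g_k\| \le \kappa_{ubg}$. We then split the integral at $t^{(1)}$:
\begin{equation*}
h_k = \|s_k(1)\| \le \int_0^{t^{(1)}} \|z_k(\tau)\|\,d\tau + \int_{t^{(1)}}^1 \|z_k(\tau)\|\,d\tau .
\end{equation*}
For the first integral, each $z_k(\tau)$ is an orthogonal projection of $g_k$, so $\|z_k(\tau)\| \le \|g_k\| \le \kappa_{ubg}$. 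This bounds the first term by $\kappa_{ubg} t^{(1)} = h_k/2$. For the second integral, Lemma~\ref{lemma:non_increasing_reduced_gradient_norm} gives $\|z_k(\tau)\| \le \|z_k(t^{(1)})\|$ for $\tau \ge t^{(1)}$, so the second term is at most $(1-t^{(1)})\|z_k(t^{(1)})\| \le \|z_k(t^{(1)})\|$. Combining, $h_k \le h_k/2 + \|z_k(t^{(1)})\|$, which is the claim.

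The delicate point is that $s_k(t)$ with the trust region (the set~\eqref{eq:proj_grad_feasible_set}) and the path in $h_k = \|s_k(1)\|$ must be the same object, with the formula $s_k' = -z_k$ valid up to $t=1$. Two things can go wrong here. First, if the trust-region bounds are active, the path becomes stationary once all free directions are exhausted; in that case $z_k = 0$ there, which is harmless for the integral bound. Second, Lemma~\ref{lemma:non_increasing_reduced_gradient_norm} is stated for $0 < t < t'$, so I would handle $\tau = t^{(1)}$ by continuity from the right, using the fact that $z_k$ is constant on $[t_i, t_{i+1})$. I would also verify that the integral representation holds across breakpoints. This follows because $s_k$ is continuous (it is a projection of a continuous curve) and piecewise affine, which is exactly~\eqref{eq:projected_gradient_path_full_expr}.
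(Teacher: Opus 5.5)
Your proof is correct and uses the same ingredients as the paper's. You bound the part of the path up to $t_k^{(1)}$ by $\|s_k(t)\| \le t\|g_k\| \le t\kappa_{ubg}$ (you get this from $\|z_k\|\le\|g_k\|$, the paper from nonexpansiveness of the projection), and you control the rest through the integral representation $s_k(t)=-\int_0^t z_k(\tau)\,d\tau$ together with the monotonicity of $\|z_k\|$ from Lemma~\ref{lemma:non_increasing_reduced_gradient_norm}. The only difference is that you integrate over all of $[0,1]$ and split at $t_k^{(1)}$, which removes the paper's auxiliary time $t_k^{(2)}$ and its reverse-triangle-inequality step; also, the argument is direct rather than by contradiction as your opening sentence says.
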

	
	\begin{proof} 
		First note that the constant $\kappa_{ubg}$ is well defined since $\varphi$ is continuously differentiable on $\calX$, the latter being compact by Assumption~\ref{assumption:inner_iterates_compact}.
		
		Since the set $\Omega$ is closed and convex, the projection mapping $\proj{\Omega}{\cdot}$ is non expansive, so that for any $t\ge 0$
		\begin{equation*}
			\begin{split}
				\left\| s_k\left(t\right) \right\| & = \left\| \proj{\Omega}{x_k-tg_k}-x_k\right\| \\
				& = \left\| \proj{\Omega}{x_k-tg_k}-\proj{\Omega}{x_k}\right\| \\
				& \le \left\| x_k-tg_k-x_k\right\|\\
				& \le t\|g_k\|.
			\end{split}
		\end{equation*}
		Choosing $t=t_k^{(1)}$ and because $\|g_k\|\le \kappa_{ubg}$ by definition of $\kappa_{ubg}$, we get
		\begin{equation}\label{eq:norm_proj_grad_t1}
			\| s_k(t_k^{(1)})\| \le \dfrac{h_k}{2}.
		\end{equation}
		Now, define $t_k^{(2)}$ as the smallest $t\ge 0$ such that
		\begin{equation}\label{eq:def_t2}
			\|s_k(t_k^{(2)})\| = h_k.
		\end{equation}
		By~\eqref{eq:norm_proj_grad_t1}, we have
		\begin{equation}\label{eq:t1_leq_t2}
			0 < t_k^{(1)} < t_k^{(2)} \le 1.
		\end{equation}
		On each interval $(t_i,t_{i+1})$, the right derivative of $s_k(t)$ at $t_i$ and its left derivative at $t_{i+1}$ are well defined and equal $-z_i$. We can thus rewrite~\eqref{eq:projected_gradient_path_full_expr} as
		\begin{equation*}
			s_k(t) = \int_{t_i}^{t} s_k^\prime(t)dt + \sum_{j=0}^{i-1} \int_{t_j}^{t_{j+1}} s_k^\prime(t)dt,
		\end{equation*} 
		which we simplify by
		\begin{equation}\label{eq:projected_gradient_path_integral_form}
			s_k(t) = -\int_{0}^{t} z_k(t)dt,
		\end{equation}
		using~\eqref{eq:projected_gradient_path_derivative} and keeping in mind that it is a sum of integrals defined on each segment $[t_i,t_{i+1}]$. 
		Now let $i_1$ and $i_2$ be the breakpoint indices such that $t_k^{(1)} \in [t_{i_1},t_{i_1+1})$ and $t_k^{(2)} \in [t_{i_2},t_{i_2+1})$ respectively. Then, by~\eqref{eq:projected_gradient_path_integral_form},
		\begin{equation*}
			s_k(t_k^{(2)}) - s_k(t_k^{(1)}) = -\int_{t_k^{(1)}}^{t_k^{(2)}} z_k(t)dt, 
		\end{equation*}
		which leads to
		\begin{equation*}
			\begin{split}
				\|s_k(t_k^{(2)}) - s_k(t_k^{(1)})\| & \le \int_{t_k^{(1)}}^{t_k^{(2)}} \|z_k(t)\|dt \\
				& \le \int_{t_k^{(1)}}^{t_k^{(2)}} \|z_k(t_k^{(1)})\|dt \\
				& \le (t_k^{(2)}-t_k^{(1)}) \|z_k(t_k^{(1)})\|,
			\end{split}
		\end{equation*}
		where we have used~\eqref{eq:lemma_reduced_gradient_norm} to bound the norm of the reduced gradient on $[t_k^{(1)},t_k^{(2)}]$.
		Combined with~\eqref{eq:t1_leq_t2} and~\eqref{eq:def_t2}, we get
		\begin{equation*}
			\begin{split}
				\dfrac{h_k}{2} = \|s_k(t_k^{(2)})\| - \dfrac{h_k}{2} &\le  \|s_k(t_k^{(2)})\| - \|s_k(t_k^{(1)})\| \\ 
				& \le \|s_k(t_k^{(2)}) - s_k(t_k^{(1)})\| \\
				& \le 	(t_k^{(2)}-t_k^{(1)}) \|z_k(t_k^{(1)})\| \\
				& \le  \|z_k(t_k^{(1)})\|,
			\end{split}
		\end{equation*}
		which is the desired inequality.
	\end{proof}
	
	The next lemma gives an upper bound on the quadratic model $\psi(t)$ in an interval of interest.
	
	\begin{lemma}\label{lemma:bound_model_proj_grad_path}
		If Assumption~\ref{assumption:inner_iterates_compact} holds and for some $t_k^{(3)}>0 $, one has
		\[\alpha_k = \|z_k(t_k^{(3)})\| > 0,\]
		then, if $T$ is the set of points in $[0,t_k^{(3)}]$ at which the piecewise quadratic $\psi$ is differentiable, 
		\begin{equation}\label{eq:lemma_ineq_model_derivative_t3}
			\psi^\prime(t) \le -\alpha_k^2 + t_k^{(3)}\kappa_{ubg}^2\|H_k\|,
		\end{equation}
		for all $t\in T$.
		
		Furthermore, 
		\begin{equation*}
			\psi^\prime(t) \le -\dfrac{1}{2}\alpha_k^2 ,
		\end{equation*}
		for all $t\in T \cap [0,t_k^{(4)}]$ and 
		\begin{equation*}
			\psi(t) \le  - \dfrac{\alpha_k^2}{2}t,
		\end{equation*}
		for all $t\in  [0,t_k^{(4)}]$ where 
		\begin{equation}\label{eq:t4_def}
			t_k^{(4)} = \min \left(t_k^{(3)}, \dfrac{\alpha_k^2}{2\kappa_{ubg}^2(\|H_k\|+1)}\right).
		\end{equation}
	\end{lemma}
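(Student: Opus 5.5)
We compute $\psi'(t)$ directly on each open interval $(t_i,t_{i+1})$ where the active set, and hence the reduced gradient $z_i$, is constant. By the chain rule and \eqref{eq:projected_gradient_path_derivative},
\[
\psi'(t) = \inner{g_k + H_k s_k(t)}{s_k'(t)} = -\inner{g_k}{z_k(t)} - \inner{H_k s_k(t)}{z_k(t)}.
\]
Since $z_k(t)=\proj{T(t)}{g_k}$ is an orthogonal projection onto a linear subspace, we have $\inner{g_k}{z_k(t)} = \|z_k(t)\|^2$.

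For $t\le t_k^{(3)}$, Lemma~\ref{lemma:non_increasing_reduced_gradient_norm} gives $\|z_k(t)\|\ge \|z_k(t_k^{(3)})\| = \alpha_k$. Hence the first term is at most $-\alpha_k^2$.

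For the second term we bound $|\inner{H_k s_k(t)}{z_k(t)}| \le \|H_k\|\,\|s_k(t)\|\,\|z_k(t)\|$. By non-expansiveness (as in the proof of Lemma~\ref{lemma:majoration_reduced_gradient_norm}), $\|s_k(t)\|\le t\|g_k\|\le t_k^{(3)}\kappa_{ubg}$. Also $\|z_k(t)\|\le\|g_k\|\le\kappa_{ubg}$, and here $x_k\in\calX$ since the iterates are monotone in $\varphi$. Combining the two terms gives \eqref{eq:lemma_ineq_model_derivative_t3}. One subtlety: the reduced gradient at a point $t\in T$ is the one from the current interval, and the monotonicity from Lemma~\ref{lemma:non_increasing_reduced_gradient_norm} only needs $t\le t_k^{(3)}$. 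When $t=t_k^{(3)}$ is itself a breakpoint, we use right-continuity of $z_k$, which is consistent with the definition of $T(t)$ via $\calA(t)$. In that case the bound at $t_k^{(3)}$ still holds, possibly with equality.

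For the second claim, we take $t\in T\cap[0,t_k^{(4)}]$ and refine the bound above: we use $\|s_k(t)\|\le t\kappa_{ubg}$ with $t\le t_k^{(4)}$ rather than $t_k^{(3)}$. This gives
\[
\psi'(t)\le -\alpha_k^2 + t\,\kappa_{ubg}^2\|H_k\| \le -\alpha_k^2 + \frac{\alpha_k^2\|H_k\|}{2(\|H_k\|+1)} \le -\tfrac12\alpha_k^2 .
\]
Equivalently, this follows from \eqref{eq:lemma_ineq_model_derivative_t3} applied with $t_k^{(3)}$ replaced by $t$, since the argument only used the upper endpoint through $\|s_k(t)\|$, while $\alpha_k$ remains a lower bound on $\|z_k(t)\|$ for $t\le t_k^{(3)}$.

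For the last claim, we use that $\psi$ is continuous and piecewise quadratic, with finitely many breakpoints. Therefore $\psi(t)=\psi(0)+\int_0^t\psi'(\tau)\,d\tau$, summing over the subintervals as in \eqref{eq:projected_gradient_path_integral_form}. Since $\psi(0)=q_k(0)=0$ and $\psi'\le-\alpha_k^2/2$ almost everywhere on $[0,t_k^{(4)}]$, we get $\psi(t)\le-\alpha_k^2t/2$.

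The only delicate points are justifying $\inner{g_k}{z}=\|z\|^2$ and handling the breakpoints through continuity and the integral representation. Neither is a serious obstacle.
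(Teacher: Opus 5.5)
Your proof is correct and follows the paper's argument essentially step for step. You differentiate $\psi$ on each open interval $(t_i,t_{i+1})$, use $\inner{g_k}{z_i}=\|z_i\|^2\ge\alpha_k^2$ via Lemma~\ref{lemma:non_increasing_reduced_gradient_norm}, bound the curvature term by Cauchy--Schwarz with $\|s_k(t)\|\le t\kappa_{ubg}$ and $\|z_i\|\le\kappa_{ubg}$, specialize to $t\le t_k^{(4)}$, and integrate from $\psi(0)=0$. The only deviation is that you obtain $\|s_k(t)\|\le t\|g_k\|$ from non-expansiveness of $P_\Omega$, whereas the paper applies the triangle inequality to the expansion~\eqref{eq:projected_gradient_path_full_expr} with $\|z_j\|\le\|g_k\|$; the paper's route is slightly more self-contained, since it uses the same piecewise-linear representation that gives $s_k'(t)=-z_k(t)$.
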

	
	\begin{proof}
		For $t \in T$, let $i$ be the breakpoint index such that $t\in(t_i,t_{i+1})$. On the latter interval, $\psi$ is differentiable and we have, by expression~\eqref{eq:projected_gradient_path_full_expr},
		\begin{equation*}
			\begin{split}
				\psi^\prime(t) & = \inner{g_k}{s_k^\prime(t)} + \inner{s_k(t)}{H_ks_k^\prime(t)} \\
				& = -\inner{g_k}{z_i} - \inner{s_k(t)}{H_kz_i}.
			\end{split}
		\end{equation*}
		Because $z_i$ is, by definition, the orthogonal projection of the vector $g_k$ on a linear subspace, we have
		\begin{equation}\label{eq:ineq_derivative_model_first_order}
			\begin{split}
				\inner{g_k}{z_i} & = \inner{z_i}{z_i} \\
				& \ge \|z_k(t_k^{(3)})\|^2 = \alpha_k^2,
			\end{split}
		\end{equation}
		where the last inequality follows from the monotonicity of the reduced gradient norm on $[0,\infty)$. 
		
		We now look at the quadratic terms. First, by the Cauchy--Schwarz inequality,
		\begin{equation*}
			\left|\inner{s_k(t)}{H_kz_i}\right| \le \|s_k(t)\| \|H_kz_i\|.
		\end{equation*}
		Then, applying the triangle inequality to expression~\eqref{eq:projected_gradient_path_full_expr}, we obtain
		\begin{equation}\label{eq:norm_proj_grad_step_ineq}
			\begin{split}
				\|s_k(t)\| & \le (t-t_i) \|z_i\| + \sum_{j=0}^{i-1} (t_{j+1}-t_j)\|z_j\| \\
				& \le (t-t_i) \|g_k\| + \sum_{j=0}^{i-1} (t_{j+1}-t_j)\|g_k\| \\
				& \le t\|g_k\| \le t_k^{(3)}\kappa_{ubg},
			\end{split}
		\end{equation}
		where we have used the facts that for all breakpoints indices $j$, $\|z_i\| \le \|g_k\| \le \kappa_{ubg}$ and that the scalar $t$ is taken in $T$. For the remaining terms,
		\begin{equation*}
			\|H_kz_i\| \le \|H_k\| \|z_i\| \le \kappa_{ubg}\|H_k\|.
		\end{equation*}
		Combining the latter inequality with~\eqref{eq:norm_proj_grad_step_ineq} bounds the quadratic terms by
		\begin{equation}\label{eq:ineq_derivative_model_second_order}
			\left|\inner{s_k(t)}{H_kz_i}\right| \le t_k^{(3)} \kappa_{ubg}^2 \|H_k\|.
		\end{equation}
		Hence, using~\eqref{eq:ineq_derivative_model_first_order} and~\eqref{eq:ineq_derivative_model_second_order},
		\begin{equation*}
			\psi^\prime(t) \le -\alpha_k^2 + t_k^{(3)} \kappa_{ubg}^2 \|H_k\|,
		\end{equation*}
		for all $t \in T$, which proves~\eqref{eq:lemma_ineq_model_derivative_t3}.
		Now, considering $t_k^{(4)}$ as defined by~\eqref{eq:t4_def}, since $t_k^{(4)} \le t_k^{(3)}$, we get that $\|z_k(t_k^{(4)})\| \ge \alpha_k > 0$. The above reasoning based on $t_k^{(4)}$ thus yields
		\begin{equation*}
			\psi^\prime(t) \le -\alpha_k^2 + t_k^{(4)} \kappa_{ubg}^2 \|H_k\| \le -\dfrac{\alpha_k^2}{2},
		\end{equation*}
		for $t\in T$, $t\le t_k^{(4)}$. It follows that, for all $t\in [0,t_k^{(4)}]$,
		\begin{equation*}
			\psi(t) \le -\dfrac{\alpha_k^2}{2}t,
		\end{equation*}
		which completes the proof.
	\end{proof}
	
	We can now establish a bound on the decrease guaranteed by the step.
	\begin{theorem}
		If Assumptions~\ref{assumption:inner_iterates_compact}, \ref{assumption:model_hessian} hold and $h_k > 0$, then
		\begin{equation}\label{eq:bound_step_decrease}
			q_k(s_k) \le -\kappa_{mdc} h_k^2 \min\left(\dfrac{h_k^2}{b_k},\Delta_k\right),
		\end{equation}
		where 
		\begin{equation*}
			\kappa_{mdc} = \dfrac{\kappa_{fcd}}{64\kappa_{ubg}^2}.
		\end{equation*}
		Furthermore, if iteration $k$ is successful, then
		\begin{equation}\label{eq:bound_successful_step_decrease}
			\varphi(x_k) - \varphi(x_k+s_k) \ge \kappa_{sdc} h_k^2 \min\left(\dfrac{h_k^2}{b_k},\Delta_k\right),
		\end{equation}
		with $\kappa_{sdc} = \eta_1 \kappa_{mdc}$.
	\end{theorem}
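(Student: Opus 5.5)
The plan is to bound the Cauchy decrease $\psi(t_k^C)=q_k(s_k^C)$ from above using Lemmas~\ref{lemma:majoration_reduced_gradient_norm} and~\ref{lemma:bound_model_proj_grad_path}. The fraction-of-Cauchy-decrease condition~\eqref{eq:step_drecrease_wrt_cauchy} then transfers this bound to $q_k(s_k)$. Finally, the acceptance test $\rho_k>\eta_1$ converts model decrease into actual decrease, which gives~\eqref{eq:bound_successful_step_decrease}.

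\textbf{Lower bound on the reduced gradient.} First apply Lemma~\ref{lemma:majoration_reduced_gradient_norm} with $t_k^{(1)}=h_k/(2\kappa_{ubg})$, so that $\|z_k(t_k^{(1)})\|\ge h_k/2$. Then invoke Lemma~\ref{lemma:bound_model_proj_grad_path} with $t_k^{(3)}=t_k^{(1)}$. We may take $\alpha_k\ge h_k/2$, since the argument of that lemma only uses $\alpha_k$ as a lower bound on $\|z_i\|$. This gives $\psi(t)\le-\tfrac{h_k^2}{8}t$ on $[0,t_k^{(4)}]$, where
\[
t_k^{(4)}=\min\left(\dfrac{h_k}{2\kappa_{ubg}},\ \dfrac{h_k^2}{8\kappa_{ubg}^2(\|H_k\|+1)}\right).
\]
Since $h_k\le\|g_k\|\le\kappa_{ubg}$ by nonexpansiveness and $\|H_k\|+1\le b_k$, we can simplify to $t_k^{(4)}\ge \tfrac{h_k^2}{8\kappa_{ubg}^2 b_k}$. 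To see this, note that $\tfrac{h_k}{2\kappa_{ubg}}\ge \tfrac{h_k^2}{2\kappa_{ubg}^2}\ge\tfrac{h_k^2}{8\kappa_{ubg}^2 b_k}$, using $b_k\ge1$.

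\textbf{Accounting for the trust region.} This is the main obstacle, because $t_k^C$ is the first local minimizer of $\psi$ subject to $\|s_k(t)\|_\infty\le\Delta_k$, and the path may leave the trust region before $t_k^{(4)}$. Two ingredients handle it. First, the bound $\|s_k(t)\|_\infty\le\|s_k(t)\|\le t\kappa_{ubg}$ shows that every $t\le\Delta_k/\kappa_{ubg}$ is admissible. Second, because $\psi'<0$ on $[0,t_k^{(4)}]$ wherever it exists, $\psi$ is strictly decreasing there. Hence the first local minimizer satisfies $t_k^C\ge\bar t:=\min\bigl(t_k^{(4)},\Delta_k/\kappa_{ubg}\bigr)$, or the path is truncated by the trust region at some point beyond $\bar t$.

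It then follows that $\psi(t_k^C)\le\psi(\bar t)\le-\tfrac{h_k^2}{8}\bar t$. The inequality $\psi(t_k^C)\le\psi(\bar t)$ needs care. When $t_k^C>t_k^{(4)}$, I would argue that $\psi$ is non-increasing from $\bar t$ up to the first local minimum, since a piecewise quadratic has no local maximum before its first local minimum along a descent start. This gives
\[
q_k(s_k^C)\le-\dfrac{h_k^2}{8}\min\left(\dfrac{h_k^2}{8\kappa_{ubg}^2b_k},\ \dfrac{\Delta_k}{\kappa_{ubg}}\right)\le -\dfrac{h_k^2}{64\kappa_{ubg}^2}\min\left(\dfrac{h_k^2}{b_k},\Delta_k\right),
\]
where the last step uses $\kappa_{ubg}\ge1$ in both entries of the minimum. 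Multiplying by $\kappa_{fcd}$ via~\eqref{eq:step_drecrease_wrt_cauchy} yields~\eqref{eq:bound_step_decrease}.

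\textbf{Successful iterations.} If iteration $k$ is successful, then $\rho_k>\eta_1$ and $q_k(s_k)<0$. Reading the ratio~\eqref{eq:tr_ratio} as measuring actual decrease against predicted decrease gives $\varphi(x_k)-\varphi(x_k+s_k)\ge-\eta_1q_k(s_k)$. Combining this with~\eqref{eq:bound_step_decrease} gives~\eqref{eq:bound_successful_step_decrease} with $\kappa_{sdc}=\eta_1\kappa_{mdc}$.
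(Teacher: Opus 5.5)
Your proposal is correct and follows essentially the same route as the paper. You combine Lemma~\ref{lemma:majoration_reduced_gradient_norm} with Lemma~\ref{lemma:bound_model_proj_grad_path} at $t_k^{(3)}=t_k^{(1)}$ to get $\psi(t)\le -h_k^2t/8$, use $\|s_k(t)\|_\infty\le t\kappa_{ubg}$ for the trust region, and finish with the fraction-of-Cauchy-decrease condition and the ratio test; the only difference is that you merge the paper's two cases (whether $s_k(t_k^{(5)})$ lies inside the trust region) into a single argument with $\bar t=\min(t_k^{(4)},\Delta_k/\kappa_{ubg})$, and you explicitly justify $\psi(t_k^C)\le\psi(\bar t)$ through monotonicity of $\psi$ before its first local minimizer, a step the paper uses without comment.
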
 
	\begin{proof}
		We first observe that if we use $t_k^{(1)}$ as $t_k^{(3)}$ in the proof of Lemma~\ref{lemma:bound_model_proj_grad_path} and apply Lemma~\ref{lemma:majoration_reduced_gradient_norm}, we get 
		\begin{equation}\label{eq:model_bound_t1}
			\psi(t) \le -\dfrac{h_k^2}{8} t,
		\end{equation}
		for $t \in [0,t_k^{(5)}]$ with \[t_k^{(5)} := \min\left(t_k^{(1)},\dfrac{h_k^2}{8\kappa_{ubg}^2 (\|H_k\|+1)}\right) = \dfrac{h_k^2}{8\kappa_{ubg}^2 (\|H_k\|+1)}.\]
		We will bound the decrease obtained with the step depending on where the point associated with $t_k^{(5)}$ lies with respect to the trust region boundary.
		
		First, assume that $\|s_k(t_k^{(5)})\|_\infty \le \Delta_k$. By~\eqref{eq:model_bound_t1}, one has $\psi^\prime(t) < -h_k^2 /8 < 0$ for all $t \in  [0,t_k^{(5)}]$, so $\psi$ is strictly decreasing on this segment. Because the Cauchy point is the first local minimizer of $\psi$ subject to the trust region and the trust region is not violated at $t_k^{(5)}$, we must have $t_k^C \ge t_k^{(5)}$. Therefore, 
		\[\psi(t_k^C) \le \psi(t_k^{(5)}) \le -\dfrac{h_k^2}{8} t_k^{(5)}. \]
		
		Then, by~\eqref{eq:step_drecrease_wrt_cauchy}, $q_k(s_k) \le \kappa_{fcd} q_k(s_k^C)$ and it follows that
		\begin{equation}\label{eq:model_decrease_cauchy_in_tr}
			q_k(s_k) \le -\kappa_{fcd} \frac{h_k^2}{8} t_k^{(5)}.
		\end{equation}
		Now, assume that $\|s_k(t_k^{(5)})\|_\infty > \Delta_k$. The latter implies that $\|s_k(t_k^C)\|_\infty = \Delta_k$ and from
		\begin{equation*}
			\left\|s_k\left(\frac{\Delta_k}{\kappa_{ubg}}\right)\right\|_\infty \le 	\left\|s_k\left(\frac{\Delta_k}{\kappa_{ubg}}\right)\right\| \le \frac{\Delta_k}{\kappa_{ubg}} \|g_k\| \le \Delta_k,
		\end{equation*}
		we can deduce that 
		\begin{equation*}
			t_k^C \ge \frac{\Delta_k}{\kappa_{ubg}}.
		\end{equation*}
		Therefore, using~\eqref{eq:model_bound_t1} with $t=t_k^C$ and~\eqref{eq:step_drecrease_wrt_cauchy} implies
		\begin{equation}\label{eq:model_decrease_cauchy_at_tr}
			q_k(s_k) \le -\kappa_{fcd} \frac{h_k^2}{8\kappa_{ubg}} \Delta_k \le -\kappa_{fcd} \frac{h_k^2}{64\kappa_{ubg}^2} \Delta_k,
		\end{equation} 
		because $\kappa_{ubg} \ge 1.$
		The inequality~\eqref{eq:bound_step_decrease} results from gathering~\eqref{eq:model_decrease_cauchy_in_tr},~\eqref{eq:model_decrease_cauchy_at_tr} and the definition of scalar $b_k$. 
		
		Finally, if the step is successful, we get inequality~\eqref{eq:bound_successful_step_decrease} by using~\eqref{eq:bound_step_decrease} and the step acceptance condition $\rho_k > \eta_1$ from Algorithm~\ref{algo:trinner_iteration}. 
	\end{proof}
	
	Once we have established the guaranteed decrease inequality~\eqref{eq:bound_step_decrease}, the rest of the proof does not involve the polyhedral structure of the constraints and we can fall back into the standard convergence theory of trust region methods. We state the main convergence theorem, whose proof and intermediate developments can be found in~\cite{conn-etal:1988a}.
	\begin{theorem}[Theorem 11 from~\cite{conn-etal:1988a}]
		If Assumptions~\ref{assumption:inner_iterates_compact}, \ref{assumption:model_hessian}, \ref{assumption:hessian_norm_compared_convergence_speed} hold, then
		\[\lim\limits_{k\to \infty} h_k = 0.\]
	\end{theorem}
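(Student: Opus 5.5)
The proof adapts the standard trust-region convergence argument of~\cite{conn-etal:1988a}. The only problem-specific input is the decrease bound~\eqref{eq:bound_step_decrease}, together with its successful-step version~\eqref{eq:bound_successful_step_decrease}. I would argue by contradiction in two stages: first show that $\liminf_k h_k = 0$, then upgrade this to a full limit.

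\textbf{Step 1 (liminf).} Suppose $h_k \ge \epsilon > 0$ for all large $k$.

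\emph{Telescoping.} The sequence $\varphi_k$ is nonincreasing, since rejected steps leave $x_k$ unchanged. It is bounded below on the compact set $\calX$ (Assumption~\ref{assumption:inner_iterates_compact}). Summing~\eqref{eq:bound_successful_step_decrease} over the successful iterations $\calS$ therefore gives
\[\sum_{k\in\calS}\epsilon^2\min\left(\epsilon^2/b_k,\Delta_k\right)<\infty .\]

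\emph{Radius control.} I would relate $\Delta_k$ to $1/b_k$. A Taylor expansion of $\varphi$ about $x_k$ gives
\[|\varphi(x_k+s_k)-\varphi_k-q_k(s_k)| \le c\,(\kappa_H+\|H_k\|)\|s_k\|^2 ,\]
where $\kappa_H$ bounds $\nabla^2\varphi$ on a neighbourhood of $\calX$. Dividing by~\eqref{eq:bound_step_decrease} shows that $|\rho_k-1|$ is small whenever $\Delta_k \le \theta\,\epsilon^2/b_k$ for a suitable $\theta>0$. Such iterations are very successful, so the radius cannot shrink below a fixed fraction $\gamma_1\theta\epsilon^2/b_k$; this uses that $b_k$ is nondecreasing.

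\emph{Contradiction.} Combining the two bullets, successful iterations contribute a decrease of order $\epsilon^4/b_k$ each. Unsuccessful iterations can only occur in runs bounded by the radius dynamics. I would then show that $\sum_{k\in\calS}1/b_k=\infty$ follows from $\sum_k 1/b_k=\infty$ (Assumption~\ref{assumption:model_hessian}). This contradicts the finite sum from the telescoping bullet.

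\textbf{Step 2 (full limit).} Suppose there is a subsequence with $h_k\ge 2\epsilon$, interleaved with iterates where $h_k<\epsilon$.

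\emph{Continuity of $h$.} The map $x\mapsto \|P_\Omega[x-\nabla\varphi(x)]-x\|$ is uniformly continuous on $\calX$. Hence between an index with $h\ge 2\epsilon$ and the next index with $h<\epsilon$, the iterates must travel a distance at least some $\delta(\epsilon)>0$.

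\emph{Bounding the distance.} On those stretches $h_k\ge\epsilon$, so~\eqref{eq:bound_successful_step_decrease} gives $\varphi_k-\varphi_{k+1}\ge \kappa_{sdc}\epsilon^2\min(\epsilon^2/b_k,\|s_k\|_\infty)$. Two cases arise.
\begin{itemize}
\item If the minimum is $\|s_k\|_\infty$, the distance travelled is bounded by a constant times the function decrease. That decrease tends to zero, because $\varphi_k$ converges.
\item If the minimum is $\epsilon^2/b_k$, then Assumption~\ref{assumption:hessian_norm_compared_convergence_speed} gives $\epsilon^4\le b_k(\varphi_k-\varphi_{k+1})/\kappa_{sdc}\to 0$. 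So this case cannot occur for large $k$.
\end{itemize}
In both cases the distance travelled over a stretch tends to zero, which contradicts the lower bound $\delta(\epsilon)$ from the continuity bullet.

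\textbf{Main obstacle.} I expect the hardest part to be the radius-control bullet of Step 1. The model Hessians are unbounded, so the classical argument that $\Delta_k$ is bounded away from zero fails. Instead one must carry $b_k$ through the radius recurrence and show that the total weight of successful iterations still diverges. I would follow~\cite{conn-etal:1988a}, which handles this by showing that $\Delta_k\ge c/b_k$ on all but finitely many iterations and by bounding the number of consecutive unsuccessful iterations.
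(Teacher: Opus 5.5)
\textbf{There is a genuine gap in Step 1.} Your Step 2 is correct. So is the lower bound $\Delta_k \ge c/b_k$ from your radius-control bullet, obtained by induction using that $b_k$ is nondecreasing.

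The problem is the contradiction bullet of Step 1. There you want $\sum_{k\in\mathcal{S}} 1/b_k=\infty$ from $\sum_k 1/b_k=\infty$, by bounding runs of consecutive unsuccessful iterations. In this framework nothing bounds those runs. The rule \eqref{eq:tr_basic_update} allows $\Delta_{k+1}\in[\Delta_k,\infty)$ after a very successful iteration, and $b_k$ may also grow during a run. The bound $\Delta_k\ge c/b_k$ forces a run to end only once $\Delta_k$ has dropped to order $1/b_k$, which can take arbitrarily many contractions.

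In fact the implication is false for sequences compatible with everything your Step 1 uses. Let the $i$-th block consist of one successful iteration followed by $i$ unsuccessful ones, with $b_k=i^2$ throughout the block. At the successful iteration, enlarge the radius enough that $i$ contractions keep it above $\theta\epsilon^2/b_k$. Then $\sum_{k\in\mathcal{S}}1/b_k=\sum_i i^{-2}<\infty$, while $\sum_k 1/b_k=\sum_i (i+1)i^{-2}=\infty$. The successful decreases, of order $1/b_k$, are summable, so the telescoping bound yields no contradiction. Even with a cap on radius expansion, run lengths would not be uniformly bounded, since the radius can build up over many very successful iterations; only an amortized count could work. So Assumption~\ref{assumption:model_hessian} plus the radius dynamics cannot deliver the liminf here.

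\textbf{The fix is to use Assumption~\ref{assumption:hessian_norm_compared_convergence_speed} already in Step 1.} Suppose $h_k\ge\epsilon$ for $k\ge k_0$. Combining \eqref{eq:bound_successful_step_decrease} with $\Delta_k\ge c/b_k$ gives, at every successful $k\ge k_0$,
\[
b_k(\varphi_k-\varphi_{k+1}) \ge \kappa_{sdc}\,\epsilon^2 \min\left(\epsilon^2, b_k\Delta_k\right) \ge \kappa_{sdc}\,\epsilon^2\min(\epsilon^2,c)>0 .
\]
Assumption~\ref{assumption:hessian_norm_compared_convergence_speed} therefore allows only finitely many successful iterations. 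After the last one, every iteration contracts the radius by at least the factor $\gamma_2$. Hence $\Delta_k\to 0$ geometrically, and $1/b_k\le \Delta_k/c$ is summable, which contradicts Assumption~\ref{assumption:model_hessian}.

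This gives $\liminf_k h_k=0$ with no counting of unsuccessful runs, and your telescoping bullet becomes unnecessary. Your Step 2 then finishes the proof. The paper itself supplies only \eqref{eq:bound_step_decrease} and defers the rest to \cite{conn-etal:1988a}, so with this repair your sketch gives a complete argument for the deferred part.
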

	
	\subsection{Global convergence of the algorithm}
	
	The content of this section follows the structure of the global convergence proof outlined in~\cite{conn-etal:1996b} for AL algorithms designed to solve problems whose constraints combine nonlinear equalities and linear inequalities. We first make the following assumption about the iterates considered.
	
	\begin{assumption}\label{assumption:iterates_domain_bounded}
		The iterates produced by Algorithm~\ref{algo:traulls} lie within a closed bounded domain of $\RR^n$.
	\end{assumption}
	
	Let $(x_K)_K$ be a sequence of such iterates, then Assumption~\ref{assumption:iterates_domain_bounded} ensures the existence of a subsequence indexed by $\calK \subseteq \NN$ converging to a limit point $x_*$. Since the iterates satisfy $x_K \in \Omega$ for all $K$, we also have $x_* \in \Omega$. We recall from~\eqref{eq:tangent_space} that $\calA(x)$ is the set of bounds active at $x$ and $T(x)$ the associated tangent space, and we abbreviate $\calA_* := \calA(x_*)$. We denote by $\tilde A_*$ the matrix gathering the linear constraints active at $x_*$,
	\begin{equation*}
		\tilde A_* := \begin{pmatrix} A \\ E^\top_* \end{pmatrix},
	\end{equation*}
	where $E_*$ is the submatrix of the identity whose columns are $e_i$, $i \in \calA_*$. Consistently with the well-posedness of the projector~\eqref{eq:minor_subpb_projector}, $\tilde A_*$ has full row rank, so $T(x_*) = \mathrm{null}(\tilde A_*)$ and there is an orthonormal matrix $N_*$ whose columns form a basis of $T(x_*)$. The next assumption ensures that the null space of the active linear constraints is large enough to achieve feasibility of the nonlinear constraints, and that the gradients of those constraints are linearly independent within that space.
	
	\begin{assumption}\label{assumption:cq_null_space_jacobian}
		The rank of the matrix $C(x_*)N_*$ is no smaller than $n_c$ at any limit point $x_*$ of the sequence $(x_K)_K$.
	\end{assumption}
	
	We now introduce the remaining notations and notions used in our convergence analysis.
	
	Since $\Omega$ is of the form~\eqref{eq:linear_constraints}, the normal cone to $\Omega$ at $x \in \Omega$ can be defined as
	\begin{equation}\label{eq:normal_cone}
		\calN_\Omega(x) = \bigg\{ A^\top\xi + \sum_{i \in \calA^+(x)} \sigma^+_i e_i - \sum_{i \in \calA^-(x)} \sigma^-_i e_i \ \Big|\ \xi\in\RR^m,\ \sigma^+_i, \sigma^-_i\ge0\bigg\},
	\end{equation}
	where the equality multipliers $\xi$ are free in sign while the bound multipliers are signed. The first-order criticality condition $\proj{\Omega}{x_*-\nabla_x\calL(x_*,\lambda_*)}=x_*$ of problem~\eqref{pb:cnls} is then equivalent to
	\[-\nabla_x\calL(x_*,\lambda_*)\in \calN_\Omega(x_*).\]
	
	Finally, for vectors $x$ at which the matrix $C(x)N_* N_*^\top C(x)^\top$ is non-singular, the least-squares multipliers associated with $\tilde A_*$ are defined by
	\begin{equation}\label{eq:least-squares_mult}
		\lambda(x) = -\big((C(x)N_*)^\dagger\big)^\top N_*^\top\nabla f(x),
	\end{equation}
	where $(C(x)N_*)^\dagger = N_*^\top C(x)^\top\big(C(x)N_* N_*^\top C(x)^\top\big)^{-1}$ is the right pseudoinverse of $C(x)N_*$. When $(C(x)N_*)^\dagger $ is well defined, Assumption~\ref{assumption:functions_C2} ensures that $\lambda(x)$ is differentiable and that its Jacobian $\nabla\lambda(x)$ is bounded in a neighborhood of $x_*$ (see~\cite{conn-etal:1996b}, Lemma 2.1).
	
	We can now proceed with the global convergence proof, starting with a first lemma giving a bound on the reduced gradient norm.
	\begin{lemma}\label{lemma:bound_proj_algrad_sol_nullspace}
		Let $\left(x_K\right)_{K\in \calK} \in \Omega$ be a sequence converging to $x_*$ and such that
		\[\left\| \proj{\Omega}{x_K - \nabla_x \Phi_K}-x_K \right\| \le \omega_K,\]
		with $\lim\limits_{K\to \infty} \omega_K = 0$. Then
		\begin{equation*}
			\left\| \proj{T(x_*)}{\nabla_x \Phi_K} \right\| \le  \omega_K,
		\end{equation*}
		for all $K\in \calK$ large enough.
	\end{lemma}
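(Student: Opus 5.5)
The plan is to use the variational characterization of the projection onto the convex set $\Omega$, together with a local identification of the active bounds near $x_*$. Write $g_K := \nabla_x \Phi_K$ and $p_K := \proj{\Omega}{x_K - g_K}$, so that the hypothesis reads $\|p_K - x_K\| \le \omega_K$. Since $\Omega$ is closed and convex and $p_K \in \Omega$, the projection is characterized by
\[
x_K - g_K - p_K \in \calN_\Omega(p_K),
\]
where the normal cone has the explicit form~\eqref{eq:normal_cone}, with the active sets $\calA^\pm(p_K)$ of the projected point.

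The first step is to show that $\calA(p_K) \subseteq \calA_*$ for all $K \in \calK$ large enough. For every index $i \notin \calA_*$ we have $l_i < (x_*)_i < u_i$. Since there are finitely many such indices, there is a margin $\delta > 0$ with $l_i + \delta \le (x_*)_i \le u_i - \delta$ for all of them. We then bound
\[
\|p_K - x_*\| \le \|p_K - x_K\| + \|x_K - x_*\| \le \omega_K + \|x_K - x_*\|.
\]
Because $\omega_K \to 0$ and $x_K \to x_*$ along $\calK$, the right-hand side is smaller than $\delta$ for $K \in \calK$ large. Hence no bound that is inactive at $x_*$ can be active at $p_K$, which gives the inclusion. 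This is the only step using the two limits, and it is where ``$K$ large enough'' enters. I expect it to be the main (though mild) point of care, since it is where the structure of $\Omega$ matters.

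With the inclusion in hand, every element of $\calN_\Omega(p_K)$ has the form $A^\top \xi + \sum_{i \in \calA(p_K)} \sigma_i e_i$ with signed coefficients $\sigma_i$. It therefore lies in the range of $\tilde A_*^\top$, the matrix gathering $A$ and the columns $e_i$, $i \in \calA_*$. This range is exactly $T(x_*)^\perp$, because $T(x_*) = \mathrm{null}(\tilde A_*)$. Applying the orthogonal projector onto $T(x_*)$ to the inclusion above therefore annihilates the normal-cone term. Using linearity of $\proj{T(x_*)}{\cdot}$, this gives
\[
\proj{T(x_*)}{g_K} = \proj{T(x_*)}{x_K - p_K}.
\]

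Finally, orthogonal projection onto a linear subspace is nonexpansive. This yields
\[
\|\proj{T(x_*)}{g_K}\| \le \|x_K - p_K\| \le \omega_K
\]
for all $K \in \calK$ large enough, which is the claim. No rank assumption is needed for this argument beyond the identification $T(x_*)^\perp = \mathrm{range}(\tilde A_*^\top)$, which holds for any matrix.
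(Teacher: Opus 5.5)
Your proof is correct and is essentially the paper's argument: the same identification $\calA(p_K)\subseteq\calA_*$ from $\|p_K-x_*\|\le\omega_K+\|x_K-x_*\|$, then eliminating the normal-cone term in $x_K-\nabla_x\Phi_K-p_K\in\calN_\Omega(p_K)$ with a tangent-space projector, and finally using nonexpansiveness. The only difference is cosmetic: you project directly onto $T(x_*)$ via $\calN_\Omega(p_K)\subseteq T(x_*)^\perp$, whereas the paper first projects onto the larger space $T(p_K)$ (its $T(v_K)$) and then uses $\|\proj{T(x_*)}{\nabla_x\Phi_K}\|\le\|\proj{T(p_K)}{\nabla_x\Phi_K}\|$, which follows from $T(x_*)\subseteq T(p_K)$.
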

	
	\begin{proof}
		To lighten the notations, we write $v_K := \proj{\Omega}{x_K - \nabla_x \Phi_K}$ and $d_K := v_K - x_K$. We have
		\begin{equation*}
			\begin{split}
				\|v_K - x_*\| &\le \|v_K - x_K\| + \| x_K - x_*\| \\
				& \le \omega_K + \| x_K - x_*\|.
			\end{split}
		\end{equation*}
		Since, by hypothesis, $\omega_K \to 0$ and $x_K \to x_*$ for $K\in \calK$, the two terms on the right hand side converge to $0$, which implies that we also have
		\begin{equation*}
			\lim\limits_{K \in \calK} v_K = x_*.
		\end{equation*}
		Therefore, there is a threshold index $K_0$ such that all bounds inactive at $x_*$ are also inactive at $v_K$ for $K \ge K_0$. This is equivalent to $\calA(v_K) \subseteq \calA_*$ for $K\ge K_0$. We then have the reverse inclusion for the associated tangent spaces $T(x_*) \subseteq T(v_K)$, giving
		\begin{equation}\label{eq:lemma_bound_redgrad_comp_norm_tanproj}
			\left\| \proj{T(x_*)}{\nabla_x \Phi_K} \right\| \le \left\| \proj{T(v_K)}{\nabla_x \Phi_K} \right\|,
		\end{equation}
		whenever $K\in \calK$ is above the threshold index $K_0$.
		
		Recall from Section~\ref{sec:algorithm} that $T(v_K)$ is the null space of a matrix $\tilde A_K$ of the form~\eqref{eq:matrix_tangent_space}. Moreover, since $v_K$ is a projection onto a convex set, it can be characterized by
		\begin{equation*}
			x_K - \nabla_x \Phi_K - v_K = -\left(\nabla_x \Phi_K + d_K\right) \in \calN_\Omega(v_K).
		\end{equation*} 
		By the expression~\eqref{eq:normal_cone}, $\calN_\Omega(v_K)$ is trivially a subset of the row space of $\tilde A_K$ and the latter is also the orthogonal complement of the null space of $\tilde A_K$, i.e.  $T(v_K)^\perp$. 
		Because $T(v_K)^\perp$ is a vector space, we also have
		\[\nabla_x \Phi_K + d_K \in T(v_K)^\perp,\]
		and we obtain, by linearity of $\proj{T(v_K)}{\cdot}$,
		\begin{equation}\label{eq:lemma_bound_redgrad_proj_equality}
			\proj{T(v_K)}{\nabla_x \Phi_K + d_K} = 0 \iff \proj{T(v_K)}{\nabla_x \Phi_K} = -\proj{T(v_K)}{d_K}.
		\end{equation}
		Combining~\eqref{eq:lemma_bound_redgrad_comp_norm_tanproj} and~\eqref{eq:lemma_bound_redgrad_proj_equality} gives
		\begin{equation*}
			\left\| \proj{T(x_*)}{\nabla_x \Phi_K} \right\| \le \left\| \proj{T(v_K)}{\nabla_x \Phi_K} \right\| = \left\| \proj{T(v_K)}{d_K} \right\| \le \|d_K\| \le \omega_K,
		\end{equation*}
		whenever $K \in \calK$ is above the threshold $K_0$, which is the desired inequality.
	\end{proof}
	
	The next lemma is a standard result about the behavior of the Lagrange multipliers in AL methods.
	\begin{lemma}[Lemma 14.4.1 from~\cite{conn-etal:2000}]\label{lemma:behavior_multipliers}
		Assume that $\lim\limits_{K\to \infty} \mu_K = \infty$ when Algorithm~\ref{algo:traulls} is executed. Then 
		\begin{equation*}
			\lim\limits_{K\to \infty} \frac{\|\lambda_K\|}{\mu_K} = 0.
		\end{equation*}
	\end{lemma}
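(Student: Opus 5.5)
The plan is to analyze how Algorithm~\ref{algo:traulls} produces $\mu_K \to \infty$, and to show that the multiplier estimates grow strictly slower than the penalty parameter along the resulting sequence. Since $\mu_K$ changes only at iterations where the test $\|c(x_K)\|\le\eta_K$ fails, $\mu_K\to\infty$ forces infinitely many such ``penalty'' iterations. I would list them as $K_1 < K_2 < \cdots$ and split every other outer iteration into the blocks of consecutive ``multiplier update'' iterations lying between two penalty increases.

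The first step is to bound the growth of $\lambda$ inside one block. Take a block starting right after an increase, at an index $K_j$ with $\mu_{K}=\mu$ fixed throughout the block. Its first tolerance is $\eta_{K_j}=\eta\mu^{-\kappa_\eta}$, and within the block $\eta$ is multiplied by $\mu^{-\beta_\eta}$ at each step. At the $\ell$-th update of the block we have $\lambda_{K+1}=\lambda_K+\mu c(x_K)$ with $\|c(x_K)\|\le \eta\mu^{-\kappa_\eta-\ell\beta_\eta}$. Summing, the total increase of $\|\lambda\|$ over the block is at most
\[
\eta\,\mu^{1-\kappa_\eta}\sum_{\ell\ge0}\mu^{-\ell\beta_\eta}\le \frac{\eta\,\mu^{1-\kappa_\eta}}{1-\mu^{-\beta_\eta}},
\]
once $\mu>1$, which holds eventually. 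Note that penalty iterations leave $\lambda$ unchanged.

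The second step is to sum over blocks. The penalty values are $\mu_0\tau^j$, and each value owns at most one block. Hence
\[
\|\lambda_K\|\le\|\lambda_0\|+C\sum_{j\le J}(\mu_0\tau^j)^{1-\kappa_\eta},
\]
where $\mu_K=\mu_0\tau^J$ and $C$ is a uniform constant. I then divide by $\mu_K$ and treat two cases.
\begin{itemize}
\item If $\kappa_\eta<1$, the geometric sum is $O(\mu_K^{1-\kappa_\eta})$, so the ratio is $O(\mu_K^{-\kappa_\eta})\to0$.
\item If $\kappa_\eta\ge1$, the sum is at most $O(J)=O(\log\mu_K)$, so the ratio again tends to $0$.
\end{itemize}
The term $\|\lambda_0\|/\mu_K$ vanishes trivially.

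The main obstacle is the bookkeeping in the first step. I need to track exactly how $\eta_K$ evolves within a block, since it is multiplied by $\mu_{K+1}^{-\beta_\eta}$ from the value reset at the increase. I also need the denominator $1-\mu^{-\beta_\eta}$ to stay bounded away from zero, which holds once $\mu_K\ge \tau\mu_0>1$, or after finitely many increases if $\mu_0\le1$. Since the statement is quoted from Lemma~14.4.1 of~\cite{conn-etal:2000} and our update rules coincide with those of that reference, I would structure the argument to match that proof, invoking it directly and only verifying that the tolerance rules are identical.
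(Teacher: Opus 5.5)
Your proof is correct and is essentially the standard argument behind Lemma~14.4.1 of \cite{conn-etal:2000}, which the paper only cites without reproducing. That argument has the same three ingredients as yours: blocks of constant penalty between increases, a geometric-series bound in $\mu^{-\beta_\eta}$ on the growth of $\|\lambda_K\|$ within each block, and an accumulation estimate over blocks; the reference phrases the last step as a recursion on $\|\lambda\|$ times the reciprocal penalty, which amounts to your explicit summation with the case split on $\kappa_\eta$.

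The one inaccuracy is your closing claim that the paper's update rules are identical to the reference's. The reference states them for the reciprocal penalty, and the paper allows $\mu_0\le 1$, where $\mu_{K+1}^{-\beta_\eta}\ge 1$ and the tolerances need not shrink. Your separate handling of the finitely many blocks with $\mu_K\le 1$ is therefore genuinely needed, so keep the self-contained argument rather than just invoking the citation.
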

	
	In the spirit of~\cite[Lemma 4.3]{conn-etal:1996b}, we state the main convergence results.
	\begin{lemma}\label{lemma:key_lemma}
		Suppose that Assumptions~\ref{assumption:functions_C2},~\ref{assumption:feasible_linear_cons} hold. Let $\left(x_K\right)_{K\in \calK}$ be a sequence of iterates in $\Omega$ satisfying Assumption~\ref{assumption:iterates_domain_bounded} and converging to $x_*$ for which Assumption~\ref{assumption:cq_null_space_jacobian} holds. Let $\lambda_* = \lambda(x_*)$, $(\lambda_K)_{K \in \calK}$ be any sequence of vectors and $(\mu_K)_{K \in \calK}$ be a non-decreasing sequence of positive scalars. Assume that~\eqref{eq:algo_traulls_criticality_test} holds with $\lim\limits_{K \in \calK}\omega_K = 0$.
		
		Then, there are positive constants $\kappa_2, \kappa_3$ such that
		\begin{equation}\label{eq:bound_1st_order_mult}
			\|\bar{\lambda}_K - \lambda_* \| \le \kappa_2 \omega_K + \kappa_3 \|x_K-x_*\|,
		\end{equation}
		\begin{equation}\label{eq:bound_ls_mult}
			\|\lambda(x_K) - \lambda_* \| \le \kappa_3 \|x_K-x_*\|,
		\end{equation}
		and
		\begin{equation}\label{eq:bound_norm_cons}
			\|c(x_K)\| \le \kappa_2 \frac{\omega_K}{\mu_K} + \kappa_3 \frac{\|x_K-x_*\|}{\mu_K} + \frac{\|\lambda_K - \lambda_* \|}{\mu_K}.
		\end{equation}
		If, in addition, $c(x_*) = 0$, then $x_*$ is a first-order critical point for problem~\eqref{pb:cnls} with corresponding Lagrange multipliers $\lambda_*$ and with $\lim\limits_{K \in \calK} \bar{\lambda}_K = \lim\limits_{K \in \calK} \lambda(x_K) = \lambda_*$.
	\end{lemma}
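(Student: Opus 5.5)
Following the structure of \cite[Lemma 4.3]{conn-etal:1996b}, the plan is to combine Lemma~\ref{lemma:bound_proj_algrad_sol_nullspace} with the identity~\eqref{eq:identity_al_lag_grad}, written in the reduced space spanned by $N_*$. Since $P_{T(x_*)} = N_*N_*^\top$ and $N_*$ has orthonormal columns, Lemma~\ref{lemma:bound_proj_algrad_sol_nullspace} gives, for $K\in\calK$ large enough,
\[
\big\| N_*^\top\big(\nabla f(x_K) + C(x_K)^\top \bar\lambda_K\big)\big\| \le \omega_K .
\]
By Assumption~\ref{assumption:cq_null_space_jacobian}, $C(x_*)N_*$ has full row rank $n_c$. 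Continuity of $C$ (Assumption~\ref{assumption:functions_C2}) then shows that $C(x_K)N_*N_*^\top C(x_K)^\top$ is nonsingular for $K$ large. Moreover, the norm of the pseudoinverse $(C(x_K)N_*)^\dagger$ is bounded by a constant $\kappa_1$ on a neighbourhood of $x_*$. This makes the least-squares multipliers~\eqref{eq:least-squares_mult} well defined along the tail of the sequence.

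For the first bound, multiply the vector $N_*^\top(\nabla f(x_K) + C(x_K)^\top\bar\lambda_K)$ on the left by $((C(x_K)N_*)^\dagger)^\top$. Since $((C N_*)^\dagger)^\top N_*^\top C^\top = I_{n_c}$, this yields $\|\bar\lambda_K - \lambda(x_K)\| \le \kappa_1\omega_K$.

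Next, $\lambda(\cdot)$ is differentiable with Jacobian bounded near $x_*$ (Lemma 2.1 of~\cite{conn-etal:1996b}). The mean value theorem on a convex neighbourhood of $x_*$ therefore gives~\eqref{eq:bound_ls_mult}, $\|\lambda(x_K)-\lambda_*\|\le\kappa_3\|x_K-x_*\|$. The triangle inequality then gives~\eqref{eq:bound_1st_order_mult} with $\kappa_2=\kappa_1$. For~\eqref{eq:bound_norm_cons}, I would use $\mu_K c(x_K) = \bar\lambda_K - \lambda_K = (\bar\lambda_K - \lambda_*) - (\lambda_K-\lambda_*)$, divide by $\mu_K$, and apply~\eqref{eq:bound_1st_order_mult}. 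Assumption~\ref{assumption:iterates_domain_bounded} is only used to guarantee that the neighbourhood constants are uniform.

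For the final claim, assume $c(x_*)=0$. From~\eqref{eq:bound_1st_order_mult} and~\eqref{eq:bound_ls_mult}, both $\bar\lambda_K$ and $\lambda(x_K)$ converge to $\lambda_*$ along $\calK$. Hence $\nabla_x\Phi_K = \nabla_x\calL(x_K,\bar\lambda_K)\to\nabla_x\calL(x_*,\lambda_*)$. By continuity and nonexpansiveness of $P_\Omega$,
\[
\|P_\Omega[x_K-\nabla_x\Phi_K]-x_K\| \to \|P_\Omega[x_*-\nabla_x\calL(x_*,\lambda_*)]-x_*\|,
\]
and the left-hand side is bounded by $\omega_K\to0$. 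So $P_\Omega[x_*-\nabla_x\calL(x_*,\lambda_*)] = x_*$. Together with $x_*\in\Omega$ (closedness of $\Omega$) and $c(x_*)=0$, this is exactly first-order criticality.

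The main obstacle is justifying uniformly the pseudoinverse bound $\kappa_1$ and the Lipschitz constant $\kappa_3$ of $\lambda(\cdot)$. This comes down to lower semicontinuity of the smallest singular value of $C(x)N_*$ near $x_*$, together with Assumption~\ref{assumption:functions_C2}. A further point needing care: Lemma~\ref{lemma:bound_proj_algrad_sol_nullspace} uses the fixed space $T(x_*)$ rather than $T(x_K)$, which is what allows the single fixed basis $N_*$ to be used throughout.
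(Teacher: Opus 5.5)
Your proposal is correct and follows essentially the same route as the paper's proof. Both arguments bound $\|N_*^\top\nabla_x\Phi_K\|$ by $\omega_K$ via Lemma~\ref{lemma:bound_proj_algrad_sol_nullspace}, apply $((C_KN_*)^\dagger)^\top$ (using $((CN_*)^\dagger)^\top N_*^\top C^\top = I_{n_c}$, which you make explicit) to get $\|\bar\lambda_K-\lambda(x_K)\|\le\kappa_2\omega_K$, and then conclude with the mean value theorem for $\lambda(\cdot)$, the triangle inequality, the identity $\mu_K c(x_K)=\bar\lambda_K-\lambda_K$, and continuity of $P_\Omega$ for the criticality claim.
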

	
	\begin{proof}
		By Assumptions~\ref{assumption:functions_C2} and~\ref{assumption:cq_null_space_jacobian}, for $K$ sufficiently large, the matrix $\left(C_KN_*\right)^\dagger$ is well defined, bounded and converges to $\left(C_*N_*\right)^\dagger$. Therefore, there is a constant $\kappa_2$ such that
		\begin{equation*}
			\|(\left(C_KN_*\right)^\dagger)^\top \| \le \kappa_2.
		\end{equation*}
		Furthermore, by Lemma~\ref{lemma:bound_proj_algrad_sol_nullspace}, we have
		\[ \|N_*^\top \nabla_x\Phi_K\| = \left\| \proj{T(x_*)}{\nabla_x \Phi_K} \right\| \le  \omega_K,\]
		for $K \in \calK$ large enough. Thus we may deduce
		\begin{equation}\label{eq:bound_mult_1st_ls}
			\begin{split}
				\|\bar \lambda_K - \lambda(x_K) \| & = \|(\left(C_KN_*\right)^\dagger)^\top N_*^\top\nabla f_K + \bar\lambda_K\| \\
				& = \|(\left(C_KN_*\right)^\dagger)^\top (N_*^\top\nabla f_K + (C_KN_*)^\top\bar\lambda_K)\| \\
				& = \|((C_KN_*)^\dagger)^\top (N_*^\top  \nabla_x \Phi_K)\| \\
				& \le 	\|(\left(C(x)N_*\right)^\dagger)^\top \| \ \|N_*^\top \nabla_x\Phi_K\| \\
				& \le \kappa_2 \omega_K.
			\end{split}
		\end{equation}
		Furthermore, by the mean value theorem for vector-valued functions, we have
		\[\lambda(x_K) - \lambda_* = \int_{0}^{1} \nabla \lambda(x(t))(x_K-x_*)dt,\]
		where $x(t) = x_K + t(x_*-x_K)$. By our discussion about the least-squares multipliers~\eqref{eq:least-squares_mult}, one can show that the above integrand is bounded for $x_K$ sufficiently close to $x_*$, hence
		\[\| \lambda(x_K) - \lambda_*\| \le \kappa_3 \|x_K-x_*\|,\]
		for $K\in \calK$ sufficiently large and for some positive constant $\kappa_3$, giving~\eqref{eq:bound_ls_mult}.
		Then, combining the latter inequality and~\eqref{eq:bound_mult_1st_ls}, we may deduce that
		\begin{equation*}
			\begin{split}
				\|\bar \lambda_K - \lambda_* \| & \le \|\bar \lambda_K - \lambda(x_K) \| + \|\lambda(x_K) - \lambda_* \| \\
				& \le \kappa_2 \omega_K + \kappa_3 \|x_K-x_*\|,
			\end{split}
		\end{equation*}
		which is inequality~\eqref{eq:bound_1st_order_mult}. 
		Next, by expression~\eqref{eq:1st_order_mult}, we have $c(x_K) = (\bar \lambda_K - \lambda_K) / \mu_K$, which leads to
		\begin{equation*}
			\begin{split}
				\|c(x_K) \| & \le \frac{\|\bar \lambda_K - \lambda_* \|}{\mu_K} + \frac{\|\lambda_K - \lambda_* \|}{\mu_K} \\
				& \le \kappa_2 \frac{\omega_K}{\mu_K} + \kappa_3 \frac{\|x_K-x_*\|}{\mu_K} + \frac{\|\lambda_K - \lambda_* \|}{\mu_K},
			\end{split}
		\end{equation*}
		where we have used~\eqref{eq:bound_1st_order_mult}.
		This concludes the first part of the lemma. 
		
		Since $(\omega_K)_K$ tends to $0$ and $(x_K)_{K\in \calK}$ converges to $x_*$ by assumption, inequalities~\eqref{eq:bound_1st_order_mult} and~\eqref{eq:bound_ls_mult} imply that both sequences $(\bar \lambda_K)_K$ and $(\lambda(x_K))_K$ converge to $\lambda_*$ for $K \in \calK$. Therefore, from identity~\eqref{eq:identity_al_lag_grad}, we obtain that the sequence of gradients $\nabla_x \Phi_K$ converges to $\nabla_x \calL(x_*,\lambda_*)$. 
		
		Finally, assume that $c(x_*) = 0$. Because~\eqref{eq:algo_traulls_criticality_test} holds, we have on the one hand that 
		\begin{equation}\label{eq:norm_proj_grad_to_0}
			\lim\limits_{K \in \calK} \|\proj{\Omega}{x_K-\nabla_x \Phi_K} - x_K\|= 0.
		\end{equation}
		On the other hand, the set $\Omega$ being closed and convex, the projection mapping $\proj{\Omega}{\cdot}$ is continuous, so $\proj{\Omega}{x_K-\nabla_x \Phi_K} \to \proj{\Omega}{x_*-\nabla_x \calL(x_*,\lambda_*)}$ as $K \in \calK$ increases. With~\eqref{eq:norm_proj_grad_to_0}, we obtain
		\[\proj{\Omega}{x_*-\nabla_x \calL(x_*,\lambda_*)} = x_*.\]
		Therefore, we have established that $(x_*,\lambda_*)$ is a KKT point and the lemma is proved.
	\end{proof}
	
	We can now establish the global convergence of Algorithm~\ref{algo:traulls}.
	\begin{theorem}\label{theo:global_convergence_traulls}
		Suppose that Assumptions~\ref{assumption:functions_C2},~\ref{assumption:feasible_linear_cons} hold. Let $\left(x_K\right)_{K}$ be a sequence of iterates generated by Algorithm~\ref{algo:traulls} satisfying Assumption~\ref{assumption:iterates_domain_bounded}. Further assume that there is a subsequence $(x_K)_{K \in \calK}$ converging to $x_*$, for which Assumption~\ref{assumption:cq_null_space_jacobian} holds and let $\lambda_* = \lambda(x_*)$.
		Then, conclusions of Lemma~\ref{lemma:key_lemma} hold.
	\end{theorem}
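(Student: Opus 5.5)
The plan is to reduce the theorem to Lemma~\ref{lemma:key_lemma} by checking its hypotheses along the subsequence $(x_K)_{K\in\calK}$ generated by Algorithm~\ref{algo:traulls}. The final conclusion of that lemma needs $c(x_*)=0$, so I will also show that this holds for the limit point.

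\textbf{Hypotheses of Lemma~\ref{lemma:key_lemma}.} Every outer iterate satisfies $x_K\in\Omega$, because the inner iteration keeps feasibility with respect to $\Omega$. By construction $x_K$ also satisfies~\eqref{eq:algo_traulls_criticality_test}. The sequence $(\mu_K)$ is non-decreasing and positive, since it is either unchanged or multiplied by $\tau>1$. It remains to check $\omega_K\to 0$, and I split into two cases.
\begin{itemize}
\item If $\mu_K$ increases infinitely often, then $\mu_K\to\infty$. After an increase, $\omega_K=\omega\mu_K^{-\kappa_\omega}$. Between increases the tolerance is only multiplied by $\mu^{-\beta_\omega}$, and I will assume, as in~\cite[Chapter~14]{conn-etal:2000}, that $\mu_0<1$ is excluded or the update uses $\min(\cdot,\gamma)$ with $\gamma<1$. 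Either way $\omega_K\le\omega\mu_K^{-\kappa_\omega}\to0$.
\item If $\mu_K$ is eventually constant, then the ``if'' branch is taken from some point on. Then $\omega_K$ and $\eta_K$ are multiplied by the fixed factor $\mu^{-\beta}$ at every step, which must be $<1$ for them to tend to zero. I will phrase this as the standing convention that the tolerance update forces $\omega_K,\eta_K\to0$, as the paper states after Algorithm~\ref{algo:traulls}.
\end{itemize}
With these checks, Lemma~\ref{lemma:key_lemma} yields~\eqref{eq:bound_1st_order_mult}--\eqref{eq:bound_norm_cons}.

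\textbf{Feasibility of the limit point.}
\begin{itemize}
\item In the bounded-$\mu$ case, $\|c(x_K)\|\le\eta_K\to0$ along all large $K$. By continuity, $c(x_*)=0$.
\item In the case $\mu_K\to\infty$, I use~\eqref{eq:bound_norm_cons}. The first two terms vanish because $\omega_K\to0$, $x_K\to x_*$ and $\mu_K\to\infty$. For the third term I write $\|\lambda_K-\lambda_*\|/\mu_K\le\|\lambda_K\|/\mu_K+\|\lambda_*\|/\mu_K$. Lemma~\ref{lemma:behavior_multipliers} sends the first ratio to zero, and the second vanishes trivially. Hence $c(x_K)\to0$ along $\calK$, so $c(x_*)=0$.
\end{itemize}
In both cases the last part of Lemma~\ref{lemma:key_lemma} then gives that $(x_*,\lambda_*)$ is first-order critical, with $\bar\lambda_K\to\lambda_*$ and $\lambda(x_K)\to\lambda_*$ along $\calK$.

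\textbf{Main obstacle.} The delicate point is the bookkeeping on the tolerances, namely that $\omega_K\to0$ in both regimes. This relies on the exact update rules of~\cite[Chapter~14]{conn-etal:2000} rather than on anything stated explicitly in Algorithm~\ref{algo:traulls}. A second, smaller subtlety is that Lemma~\ref{lemma:behavior_multipliers} concerns the full sequence while we work on the subsequence $\calK$; this is harmless because the ratio tends to zero along every subsequence.
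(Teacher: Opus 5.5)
Your proposal is correct and follows the paper's proof essentially verbatim: you verify the hypotheses of Lemma~\ref{lemma:key_lemma}, then split on whether $(\mu_K)$ is bounded, using $\|c(x_K)\|\le\eta_K\to0$ in the bounded case and \eqref{eq:bound_norm_cons} with Lemma~\ref{lemma:behavior_multipliers} in the unbounded case. Your extra bookkeeping on $\omega_K,\eta_K\to0$ is a reasonable addition, since the paper only asserts this in its remark after Algorithm~\ref{algo:traulls}; note that the unbounded case needs no condition on $\mu_0$ (eventually $\mu_K>1$ anyway), whereas the bounded case genuinely requires the limiting penalty value to exceed $1$.
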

	\begin{proof}
		As in the proof of Lemma~\ref{lemma:key_lemma}, the inequalities~\eqref{eq:bound_1st_order_mult},~\eqref{eq:bound_ls_mult},~\eqref{eq:bound_norm_cons} can be derived from the assumptions. All that remains is to show that $c(x_*) = 0$. There are two cases to distinguish. 
		
		First, if the sequence of penalty parameters is bounded above, then there is $K$ sufficiently large such that $\|c(x_K)\| \le \eta_K$ for all $K$. Since $\eta_K \to 0$, we get that $c(x_*) = 0$.
		
		Next, if the sequence of penalty parameters is unbounded, then by the update rule, we have that $(\mu_K)_K$ grows to $\infty$. Hence, by Lemma~\ref{lemma:behavior_multipliers}, the ratio $\lambda_K /\mu_K$ converges to zero. Therefore, by inequality~\eqref{eq:bound_norm_cons}, we also obtain that $c(x_*) = 0$. 
		In both cases, $x_*$ is feasible with respect to the nonlinear constraints and the remaining conclusions of Lemma~\ref{lemma:key_lemma} hold.
	\end{proof}
	Our convergence analysis is based on the criticality measure~\eqref{eq:algo_traulls_criticality_test} but, as we already mentioned in Section~\ref{sec:algorithm}, the latter is not available in closed form for a general polyhedron such as~\eqref{eq:linear_constraints}. This is what justified our choice to monitor criticality in our algorithm through the reduced gradient norm $\|P_{T(x_K)}[\nabla_x\Phi_K]\|$, which only involves the projection onto the tangent space of the currently active bounds. The two measures are linked through the tangent cone $\calT_\Omega(x_K)$, which is the dual cone of $\calN_\Omega(x_K)$. Indeed, one always has \[\|\proj{\Omega}{x_K-\nabla_x\Phi_K}-x_K\|\le\|P_{\calT_\Omega(x_K)}[-\nabla_x\Phi_K]\|\] 
	and whenever the multipliers associated with the bounds active at $x_K$ are of the correct sign the projection onto the cone reduces to the projection onto $T(x_K)$, so that~\cite[Section 7.2]{conn-etal:1996b}
	\[\|P_{\calT_\Omega(x_K)}[-\nabla_x\Phi_K]\|=\|P_{T(x_K)}[\nabla_x\Phi_K]\|.\] 
	Consequently, under the assumption that these sign conditions hold at every outer iterate, the reduced-gradient rule~\eqref{eq:criticality_cond_reduced_grad} certifies the criticality test~\eqref{eq:algo_traulls_criticality_test}, and Theorem~\ref{theo:global_convergence_traulls} applies to the iterates the implementation produces. Without this assumption the reduced gradient controls only the tangential component of $\nabla_x\Phi_*$, and the limit point may not be critical.
	
	\section{Numerical experiments}\label{sec:numerical_experiments}
	
	In this section, we report numerical experiments conducted with the Julia implementation of our algorithm, named TRAULLS\footnote{Trust Region AUgmented nonLinear Least-squares Solver} on a Mac mini with an M4 processor and 24 GB of RAM. All the benchmarks we present here are also accessible in the \texttt{benchmark} folder of the GitHub repository.
	The tests were carried out on a set of 49 constrained NLS problems from collections~\cite{hockschittkowski:1980, schittkowski:1987, lucksanvlcek:1999} and problems 2 and 3 referenced in~\cite{biegler-etal2000}. For problems with variable dimensions, we set the number of variables to $n \in \{100, 500, 1000\}$. Thus, we have a total of 79 problem instances ranging from 2 to 1000 variables, with up to 2500 residuals and 1000 constraints. Problems with inequality constraints were converted to the form~\eqref{pb:cnls} by adding slack variables. For our tests, we set the criticality tolerance to $\omega_* = 10^{-5}$, the feasibility tolerance to $\eta_* = 10^{-6}$, the maximum number of outer iterations to 500 and the maximum number of inner iterations to 1000. The remaining parameters are set to $\mu_0 = 10$, $\tau = 100$, $\kappa_\omega = \beta_\omega = \eta = \omega = 1$, $\kappa_\eta = 0.1$, $\beta_\eta = 0.9$, $\kappa_{hyb} = 0.1$, $\delta_0 = 0.1$, and the trust-region constants $\alpha_1 = 0.25$, $\alpha_2 = 2.5$, $\eta_1 = 0.25$, $\eta_2 = 0.75$, $\gamma_1 = 0.0625$, in accordance with the conditions of Section~\ref{subsec:implementation_details}. We present our results with performance profiles of type Dolan--Mor\'e~\cite{dolanmore:2002} produced with the \texttt{BenchmarkProfiles.jl} package~\cite{benchmark-profiles-julia}.
	
	\subsection{Comparison of Hessian approximations}
	
	Our first set of experiments compares the behavior and performance of our algorithm depending on the Hessian approximation used during the inner minimization. We compared the GN approximation~\eqref{eq:hessian_gn_approx} against structured approximations of the form~\eqref{eq:hessian_full_approx}. Based on the structured secant equation~\eqref{eq:structured_secant_eq}, we implemented the SR1 update we discussed and the BFGS update that one can derive similarly. Both structured approximations were used in their plain and hybrid variants with the switching rule~\eqref{eq:hybrid_update_strategy}.
	We compared these five strategies against three metrics: computation time, number of outer iterations and number
	of inner iterations. Results are indicated in Figure~\ref{fig:compare_hessians}. 
	\begin{figure}[htbp]
		\centering
		\begin{subfigure}[b]{0.32\textwidth}
			\centering
			\includegraphics[width=\textwidth]{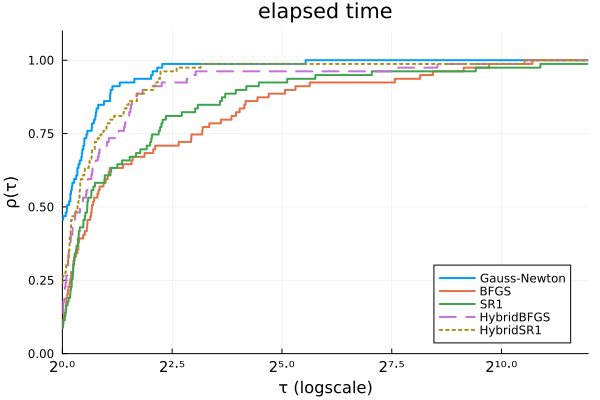}
		\end{subfigure}
		\hfill
		\begin{subfigure}[b]{0.32\textwidth}
			\centering
			\includegraphics[width=\textwidth]{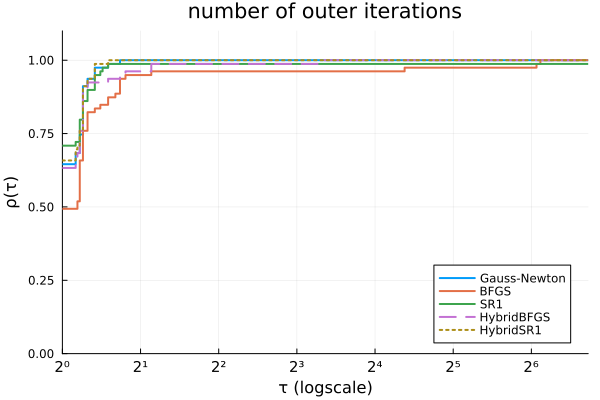}
		\end{subfigure}
		\hfill
		\begin{subfigure}[b]{0.32\textwidth}
			\centering
			\includegraphics[width=\textwidth]{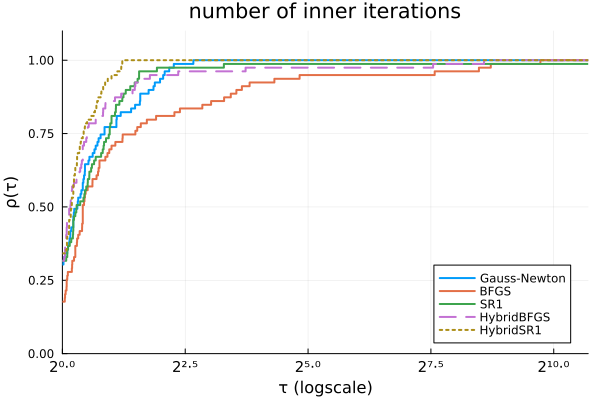}
		\end{subfigure}
		\caption{Performance profiles comparing five different Hessian approximation strategies with respect to computation time (left), number of outer iterations (center),
			and number of inner iterations (right).}
		\label{fig:compare_hessians}
	\end{figure}
	All five variants successfully solved all but one problem: the plain SR1 variant reaches the maximum number of iterations on the HS373 problem.
	In terms of computation time, the GN approximation is the fastest: since it requires no	additional storage or update computation, it is the cheapest per iteration as our method mainly involves matrix-vector products. The GN approximation especially dominates on problems having sparse Jacobians and small residuals at the solution, since the second-order terms are still stored as a dense matrix. When measured by the number of inner iterations, the hybrid variants prove more efficient than their plain counterparts for both formulas. These observations are consistent with the discussion about the hybrid switching rule~\eqref{eq:hybrid_update_strategy} which -- as in the unconstrained case -- correctly identifies situations where the Gauss--Newton approximation must be corrected and activates the structured quasi-Newton correction accordingly. All approximation approaches are closer in terms of outer iterations, with plain BFGS a little bit behind, but overall, the Hybrid SR1 appears to be the most robust. To confirm these observations, we have focused our attention on a comparison of GN with Hybrid approximations, the latter appearing to be the most efficient. Profiles in Figure~\ref{fig:gn_vs_hybrid} compare the three approaches against the number of residual and gradient evaluations. The latter are similar to the inner iterations profile previously shown, as we evaluate the residuals one time per inner iteration and evaluate the gradient at every accepted step.
	Nevertheless, Hybrid-SR1 exhibits the best efficiency and robustness. This also indicates that the SR1 approximation is well suited to handling negative curvature in the trust region setting. 
	\begin{figure}[htbp]
		\centering
		\begin{subfigure}[b]{0.45\textwidth}
			\centering
			\includegraphics[width=\textwidth]{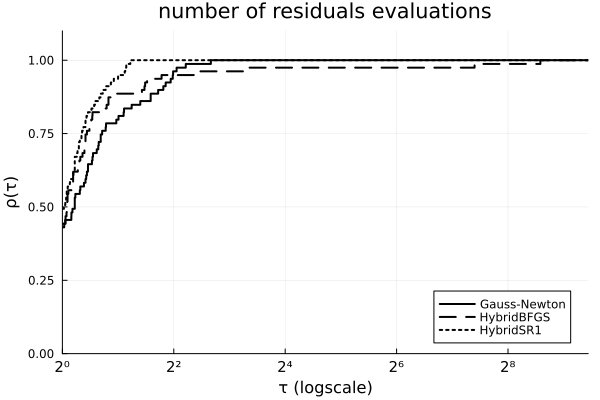}
		\end{subfigure}
		\hfill
		\begin{subfigure}[b]{0.45\textwidth}
			\centering
			\includegraphics[width=\textwidth]{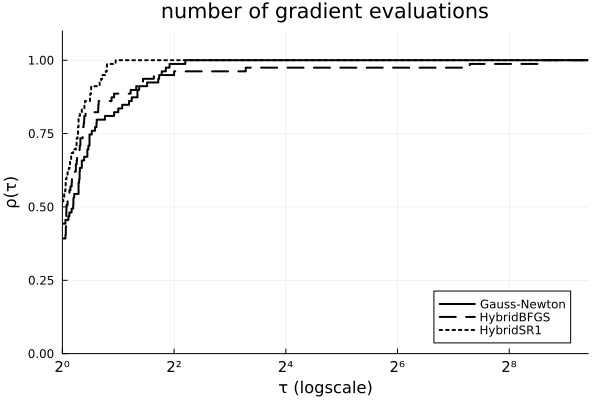}
		\end{subfigure}
		\caption{Performance profiles comparing the Gauss--Newton approximation against hybrid approaches with respect to the number of residual evaluations (left),
			and gradient evaluations (right).}
		\label{fig:gn_vs_hybrid}
	\end{figure}
	Based on these results, we retain the Hybrid SR1 variant as the best overall strategy for
	TRAULLS, and use it in the comparisons that follow.
	
	\subsection{Comparison with IPOPT and Percival}
	
	We now compare TRAULLS (with the Hybrid SR1 approximation) against solvers IPOPT~\cite{wachterbiegler:2006} and Percival~\cite{arreckx-etal:2016}. The latter is also an augmented Lagrangian algorithm and has a native Julia implementation. The inner minimization uses a Julia version of the TRON~\cite{linmore:1999a} algorithm -- part of the \texttt{JSOSolvers.jl} package~\cite{migot-etal:2026}. To benefit from the benchmarking facilities provided by the JuliaSmoothOptimizers (JSO)\footnote{https://jso.dev/} ecosystem, we used the wrapper \texttt{NLPModelsIpopt.jl}~\cite{nlp-models-ipopt-julia}. This allows us to run both solvers on our test set of problems -- all models being encoded in \texttt{NLSProblems.jl}~\cite{nls-problems-julia} -- and export the benchmark metrics using \texttt{SolverBenchmark.jl}~\cite{solver-benchmark-julia}.  To make the comparison with IPOPT relevant, we have set the parameters \texttt{nlp\_scaling\_method} = \texttt{none}, \texttt{dual\_inf\_tol} = \texttt{Inf}, \texttt{constr\_viol\_tol} = \texttt{Inf}, \texttt{compl\_inf\_tol }= \texttt{Inf}, \texttt{acceptable\_iter} = \texttt{0} and the maximum number of iterations to 1000. For Percival, since it also follows Algorithm~\ref{algo:traulls}, we have set its tolerances and parameters to the same values as ours.
	For these tests, we compare the three solvers against the CPU time, the number of residual evaluations and the number of gradient evaluations. The results are presented in Figure~\ref{fig:perf_solvers}. During our experiments, IPOPT reached the maximum number of iterations on problem HS27 and returned the \texttt{unknown} internal message on problem HS70. Percival reached the maximum number of iterations on the two instances of problem 5.11 from~\cite{lucksanvlcek:1999} having $n=500$ and $n=1000$.
	\begin{figure}[htbp]
		\centering
		\begin{subfigure}[b]{0.32\textwidth}
			\centering
			\includegraphics[width=\textwidth]{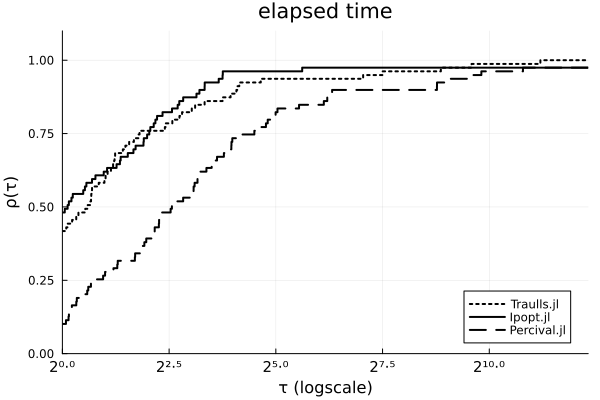}
		\end{subfigure}
		\hfill
		\begin{subfigure}[b]{0.32\textwidth}
			\centering
			\includegraphics[width=\textwidth]{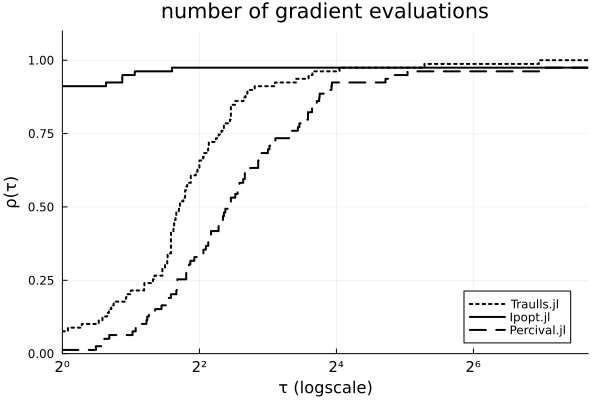}
		\end{subfigure}
		\hfill
		\begin{subfigure}[b]{0.32\textwidth}
			\centering
			\includegraphics[width=\textwidth]{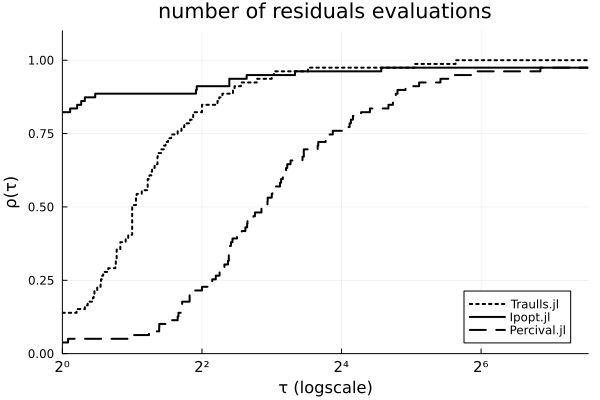}
		\end{subfigure}
		\caption{Performance profiles comparing TRAULLS (Hybrid SR1), IPOPT, and Percival
			on the test set, with respect to computation time (left), residual evaluations (center),
			and gradient evaluations (right).}
		\label{fig:perf_solvers}
	\end{figure}
	
	In terms of computation time, TRAULLS and IPOPT achieve comparable performance, with
	IPOPT holding a modest advantage on the fastest problems but the gap narrowing as
	problem difficulty increases. Percival is consistently slower than both solvers across the
	test set. Regarding the function-evaluation metrics, IPOPT dominates: its interior-point
	strategy, which exploits second-order information through exact Hessians, requires
	significantly fewer residual and gradient evaluations to reach convergence. TRAULLS, which
	relies on a structured quasi-Newton approximation and therefore does not require exact
	second derivatives, performs comparably to Percival on these metrics and outperforms it on
	a majority of problems.
	Across all three metrics, IPOPT is a clear winner, in spite of the two failures. However, our method is a close second in computation time and achieves similar robustness with regard to the number of residual evaluations. The clear advantage of TRAULLS over Percival shows that within the same algorithmic framework, exploiting the least-squares structures leads to better performance.
	
	\section*{Conclusion}
	In this paper, we have presented an algorithm able to solve NLS problems subject to constraints of general form. Our method fully exploits the structure of the problem through the AL reformulation and a structured Hessian approximation. 
	Although our implementation exhibits promising performance, its limitation for large-scale applications lies in the Hessian approximation: the dense storage of the structured second-order correction and the associated computations become prohibitive as the number of variables grows. A limited-memory variant of the structured update, following~\cite{byrd-etal:1994}, would be a natural extension to address this bottleneck. To tackle problems with a large number of residuals, another avenue of research could be to combine the AL framework with an adaptive sampling strategy applied to the residuals, lowering the dimensions of the inner minimization subproblems. This would extend to the constrained case techniques currently investigated for unconstrained problems~\cite{bellavia-etal:2026}.
	
	\vspace{1em}\noindent\textbf{Acknowledgments:} The first author is supported by Mitacs and  Hydro-Qu\'ebec through grants IT40867 and IT38322. The authors would also like to thank the Natural Sciences and Engineering Research Council of Canada (NSERC), which supports the second author through a Discovery Grant, and the Institute for Data Valorization (IVADO).
				
	%%%%%%%%%%%%%%%%%% BIBLIOGRAPHY
	\bibliographystyle{plainnat}
	\bibliography{refs}
	
	\clearpage
	
	\section*{Appendix}
	
	\appendix

	\section{Cauchy point computation}\label{appendix:cauchy_point_computation}
	
	For the sake of clarity, we omit the iteration index during the rest of this appendix. 
	We describe the procedure used to compute the first local minimizer of the quadratic $q(s) = \frac{1}{2} \inner{s}{Hs} + \inner{g}{s}$ along the projected gradient path $s(t) = \proj{\Omega}{x-tg}-x$ with $t \ge 0$. This is an adaptation of~\cite[Algorithm 17.3.1]{conn-etal:2000} to the case where linear equalities are present. 
	For each $t \ge 0$, the projected step $s(t)$ satisfies the constraints
	$As(t) = 0$ and $s^{(l)} \le s(t) \le s^{(u)}$,
	where $s^{(l)}$ and $s^{(u)}$ are iteration-dependent bounds.
	
	The breakpoints
	\begin{equation*}
		0=t_0 < t_1 < \ldots < t_p,
	\end{equation*} 
	correspond to the successive scalars at which at least one component of $s(t)$ satisfies a bound with equality. Note that since $A$ has $m$ rows and is full rank, there are $n-m$ degrees of freedom remaining  and thus at most $n-m$ breakpoints before the projected gradient path is constant. We recall the recursive expression
	\[s(t) = -(t-t_i)z_i + s(t_i),\]
	for $t\in [t_i,t_{i+1})$ and with the reduced gradient $z_i$ given by~\eqref{eq:reduced_gradient_proj_grad_path}.
	
	To find the first local minimum of the scalar function $\psi(t) = q(s(t))$, we successively study each interval $[t_i,t_{i+1})$ to assert whether or not it contains a local minimizer.
	Assume we have not found a local minimizer on $[t_0,t_i)$ and thus look at the interval $[t_i,t_{i+1})$. The model along this arc can be written
	\begin{equation}\label{eq:model_projected_gradient_interval}
		\psi(t) = \dfrac{\psi_i''}{2}(\Delta t)^2 + \psi_i'\Delta t
	\end{equation}
	with
	$\psi_i'' = \inner{z_i}{Hz_i}$, $\psi_i' = -\inner{s(t_i)}{Hz_i} - \inner{g}{z_i}$ and $\Delta t = t-t_i$.
	Different cases can occur depending on the values of the slope $\psi_i'$ and the curvature $\psi_i''$. 
	Firstly, if
	\begin{align*}
		&\psi_i' > 0 \text{ or } \\
		&\psi'_i=0 \text{ and } \psi_i'' > 0,
	\end{align*}
	then $t_i$ is the required minimizer. Next, if
	\[\psi_i' < 0 \text{ and } \psi_i'' > 0,\]
	the quadratic~\eqref{eq:model_projected_gradient_interval} has a strict minimizer at
	\[t_i- \dfrac{\psi_i'}{\psi_i''}.\]
	If the latter belongs to the interval of interest, i.e. 
	\[t_i-\psi_i'/\psi_i'' < t_{i+1},\]
	then it is the required minimizer. In other cases, the minimizer is at or beyond $t_{i+1}$. To prepare for the study of the next interval, we first need to find the next breakpoint, given by the smallest scalar $t_{i+1}>t_i$ such that, at $s(t_{i+1})$, one of the free components hits one of its bounds. By introducing
	\[\delta_i^- = \min_{(z_i)_j > 0} \left\{ (s(t_i)_j - s^{(l)}_j)/(z_i)_j\right\}, \qquad \delta_i^+ = \min_{(z_i)_j < 0} \left\{(s(t_i)_j - s^{(u)}_j)/(z_i)_j\right\},\]
	the next breakpoint is given by 
	\begin{equation}\label{eq:next_breakpoint}
		t_{i+1} = t_i + \delta_i,
	\end{equation} 
	with $\delta_i = \min\left(\delta_i^-,\delta_i^+\right)$. We then add the corresponding variable index to the list of fixed components, compute the next reduced gradient $z_{i+1}$ and finally update the slope and curvature of the model along the next interval. A last termination case can occur. Since $A$ is of rank $m$, at most $n-m$ bounds can become active so if we never find a local minimum, the procedure still ends whenever it reaches the last breakpoint $t_{p}$. Indeed, past this breakpoint, the model along the projected gradient path is constant so we can return the last accumulated step $s(t_p)$ as the Cauchy step. Note that, in this case, it is also the total step, because no more variables can be modified. The presence of the trust region constraint ensures that all bounds become active. 
	The procedure for the Cauchy step computation is outlined in Algorithm~\ref{algo:cauchy_point}.
	\begin{algorithm}
		\caption{Cauchy step computation}\label{algo:cauchy_point}
		\begin{algorithmic}[1]
			\Require Hessian $H$, gradient $g$, bounds on the direction $s^{(l)},\ s^{(u)}$
			\State Identify $\calA(0)$ and compute the direction $z_0$
			\State Set \textbf{found} $\gets$ \textbf{false} 
			\State Set $t_0\gets 0$, $s^{(0)}\gets 0$ and initialize counter $i\gets 0$
			\Repeat
			\State $\psi_i' \gets -\inner{s^{(i)}}{Hz_i}-\inner{g}{z_i},\ \psi_i'' \gets \inner{z_i}{Hz_i}$
			\State Find the next breakpoint $t_{i+1}$ by~\eqref{eq:next_breakpoint}
			\If{$\psi_i' > 0 \text{ or } \psi'_i=0 \text{ and } \psi_i'' > 0$}
			\State Set $s^C \gets s^{(i)}$
			\State \textbf{found $\gets$ \textbf{true}}
			\ElsIf{$\psi_i' < 0 \text{ and } \psi_i'' > 0$ \text{ and } $t_i- \psi_i'/\psi_i'' < t_{i+1}$}
			\State Set $\Delta t \gets - \psi_i'/\psi_i''$
			\State Set$s^C \gets s^{(i)} - \Delta t z_i$, \textbf{found $\gets$ \textbf{true}}
			\Else
			\State Set $\Delta t \gets t_{i+1}-t_i$
			\State Compute the next direction $z_{i+1}$
			\State Set $s^{(i+1)} \gets s^{(i)} - \Delta t z_i$
			\EndIf
			\State Increment $i \gets i+1$
			\If{$|\calA(t_i)| = n-m$}
			\State Set $s^C \gets s^{(i)}$, \textbf{found} $\gets$ \textbf{true}
			\EndIf
			\Until{\textbf{found}}
			\State \Return $s^C$
		\end{algorithmic}
	\end{algorithm}

\end{document}